\def\deg{\operatorname{deg}}
\newtheorem{thm}{Theorem}
\newtheorem{theorem}{Theorem}
\newtheorem*{theorem*}{Theorem}
\newtheorem{lemma}[thm]{Lemma}
\newtheorem*{thm*}{Theorem}
\newtheorem*{thmA*}{Theorem A}
\newtheorem*{thmB*}{Theorem B}
\newtheorem*{TheoremCFZ1*}{Theorem CFZ1}
\newtheorem*{TheoremCFZ2*}{Theorem CFZ2}
\newtheorem*{TheoremCFZ3*}{Theorem CFZ3}
\newtheorem*{theoremB*}{Theorem B}
\newtheorem*{theoremA*}{Theorem A}
\newtheorem*{theoremPL2*}{Theorem PL2}
\newtheorem*{TheoremPZ*}{Theorem PZ}
\newtheorem*{TheoremLN*}{Theorem LN}
\newtheorem*{theoremLu*}{Theorem Lu}
\newtheorem*{cor*} {Corollary}
\newtheorem {claim} {Claim}
 \newtheorem*{defn*} {Definition}
\newtheorem*{ZL*}{Zalcman's Lemma}
\newtheorem*{Zp*}{ZP1 Lemma}
\newtheorem*{ZP*}{ZP2 Lemma}
\newtheorem*{Ne*}{N Lemma}
\newtheorem*{claim*}{Claim}
  \theoremstyle{definition}
\newtheorem{examp}{Example}
\newtheorem*{examp*}{Example}
\newtheorem*{remark*}{Remark}
 \theoremstyle{remark}
\newtheorem*{notation*}{Notation}
\newcommand{\lemref}[1]{Lemma~\ref{#1}}
\newcommand{\C}{\mathbb{C}}
\newcommand{\D}{\displaystyle}
\newcommand{\DF}[2]{\frac{\D#1}{\D#2}}
 \renewcommand{\sectionmark}[1]{}
 \newcommand{\CF}{\mathcal F}
\newcommand{\ve}{\varepsilon}
\begin{document}

\baselineskip20pt

 \title[A criterion of normality based on a single holomorphic function II]
 {A criterion of normality based on a single holomorphic function II}
\author[Xiaojun Liu and Shahar Nevo]{Xiaojun Liu$^{1}$ and Shahar Nevo$^{2}$}
\thanks{$^1$ Research supported by the NNSF of China Approved
No.11071074 and also supported by the Outstanding Youth Foundation of Shanghai No. slg10015.}

\thanks{$^2$ Research supported by the Israel Science Foundation Grant No. 395/07}

\address{Xiaojun Liu, Department of Mathematics\\
University of Shanghai for Science and Technology, Shanghai 200093,
P.R. China} \email{Xiaojunliu2007@hotmail.com}

\address{Shahar Nevo, Department of Mathematics\\
Bar-Ilan University, 52900 Ramat-Gan, Israel}
\email{nevosh@macs.biu.ac.il}


\begin{abstract}
In this paper, we continue to discuss   normality based on a
single\linebreak holomorphic function. We obtain the following
result. Let $\CF$ be a family of functions holomorphic on a domain
$D\subset\mathbb C$. Let $k\ge2$ be an integer and let
$h(\not\equiv0)$ be a holomorphic function on $D$, such that
$h(z)$ has no common zeros with any $f\in\CF$. Assume also that
the following two conditions hold for every $f\in\CF$:\linebreak
(a) $f(z)=0\Longrightarrow f'(z)=h(z)$ and 
(b) $f'(z)=h(z)\Longrightarrow|f^{(k)}(z)|\le c$, where $c$ is a
constant. Then $\CF$ is normal on $D$.

A geometrical approach is used to arrive at the result which
significantly improves the previous results of the authors,
\textit{A criterion of normality based on a single holomorphic
function}, Acta Math. Sinica, English Series (1) \textbf{27}
(2011), 141--154 and of Chang, Fang, and Zalcman, \textit{Normal
families of holomorphic functions}, Illinois Math. J. (1)
\textbf{48} (2004), 319--337.
 We~also deal with two other similar criterions of
normality. Our results are shown to be sharp.
\end{abstract}


\keywords{Normal family, holomorphic functions, zero points}

\subjclass[2010]{30D35}

 \maketitle

 \section{Introduction}\label{introduction}

In \cite{11}, X.C. Pang and L. Zalcman proved the following theorem.

\begin{TheoremPZ*} Let $\CF$ be a family of meromorphic
functions on a domain  $D\subset\C$, all of whose zeros have
multiplicity at least $k$, where $k\ge1$ is an integer. Suppose
there exist constants $b\ne0$ and $h>0$ such that, for every
$f\in\CF$, $f(z)=0\Longleftrightarrow f^{(k)}(z)=b$ and
$f(z)=0\Longrightarrow0<|f^{(k+1)}(z)|\le h$. Then $\CF$ is a normal
family on $D$.
\end{TheoremPZ*}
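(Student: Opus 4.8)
By Zalcman–Pang rescaling the problem reduces, by contradiction, to the nonexistence of a certain limit function on $\C$. Since normality is local, it suffices to prove that $\CF$ is normal near each $z_0\in D$, so suppose $\CF$ fails to be normal at some $z_0$. As every $f\in\CF$ has zeros of multiplicity at least $k$, Pang's extension of Zalcman's Lemma applies with the exponent $k$: there exist $f_n\in\CF$, points $z_n\to z_0$, and numbers $\rho_n\to0^+$ such that
$$g_n(\zeta)=\rho_n^{-k}f_n(z_n+\rho_n\zeta)$$
converges locally uniformly in the spherical metric to a non-constant meromorphic function $g$ on $\C$ with $g^{\#}(\zeta)\le g^{\#}(0)=1$. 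The spherical-derivative bound forces $g$ to have finite order, and all zeros of $g$ inherit multiplicity at least $k$.

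The next step is to transfer the two hypotheses to $g$ via Hurwitz's theorem. Differentiating gives $g_n^{(j)}(\zeta)=\rho_n^{\,j-k}f_n^{(j)}(z_n+\rho_n\zeta)$, so that $g_n^{(k)}(\zeta)=f_n^{(k)}(z_n+\rho_n\zeta)$ and $g_n^{(k+1)}(\zeta)=\rho_n f_n^{(k+1)}(z_n+\rho_n\zeta)$. At a zero $\zeta_0$ of $g$ there are, by Hurwitz, zeros $\zeta_n'\to\zeta_0$ of $g_n$, i.e.\ zeros $w_n=z_n+\rho_n\zeta_n'$ of $f_n$; at these points $f_n^{(k)}=b$ and $0<|f_n^{(k+1)}|\le h$, whence $g^{(k)}(\zeta_0)=b$ and $g^{(k+1)}(\zeta_0)=\lim_n\rho_n f_n^{(k+1)}(w_n)=0$. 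Thus every zero of $g$ has multiplicity exactly $k$ (the value $b\ne0$ forbids higher order), and $g^{(k)}-b$ vanishes there to order at least $2$. Conversely, provided $g^{(k)}\not\equiv b$, a second application of Hurwitz to $g_n^{(k)}-b\to g^{(k)}-b$ together with $f_n^{(k)}=b\Ri f_n=0$ shows that $g^{(k)}-b$ vanishes \emph{only} at the zeros of $g$. In short, $g$ is a non-constant meromorphic function of finite order, all of whose zeros are exactly $k$-fold, for which the $b$-points of $g^{(k)}$ coincide with the zeros of $g$ and are all multiple.

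It then remains to show that no such $g$ exists, and this value-distribution step is where I expect the main difficulty. The plan is to apply Nevanlinna's Second Fundamental Theorem to $g^{(k)}$ with the targets $0,\,b,\,\infty$ and to exploit the multiplicity data: since every $b$-point of $g^{(k)}$ is at least double one has $\ov N\!\left(r,\tfrac{1}{g^{(k)}-b}\right)\le\tfrac12 N\!\left(r,\tfrac{1}{g^{(k)}-b}\right)$, while the zeros of $g$ being exactly $k$-fold ties $\ov N(r,1/g)$ to $N(r,1/g)$; combined with logarithmic-derivative estimates relating $T(r,g^{(k)})$ and $T(r,g)$, these inequalities should force the characteristic of $g$ to collapse, contradicting non-constancy. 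The argument must be split into the transcendental and rational cases, and one must separately dispose of the degenerate possibility $g^{(k)}\equiv b$ — which makes $g$ the polynomial $\tfrac{b}{k!}(\zeta-\zeta_0)^k$ — by showing it is incompatible with the pre-limit equivalence $f_n=0\Leftrightarrow f_n^{(k)}=b$. Controlling the poles of the meromorphic limit and extracting a clean contradiction from the multiple-$b$-point condition is the technical heart; here the bound $|f^{(k+1)}|\le h$ is exactly what produces the multiplicity-$\ge2$ phenomenon, and the condition $f^{(k+1)}\ne0$ is what prevents $g^{(k)}-b$ from degenerating in the nondegenerate case.
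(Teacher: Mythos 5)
You should first note that the paper never proves Theorem PZ: it is quoted as background from \cite{11}, so the only in-paper model to compare with is the proof of \lemref{lemma5}, which follows exactly the architecture you propose (Pang rescaling with exponent $k$, Hurwitz transfer, a value-distribution lemma, final contradiction). Your first two steps are correct and match that model: since $f=0\Rightarrow f^{(k)}=b$, Pang's lemma (\lemref{lemma1}) applies with $\alpha=k$, and your Hurwitz argument legitimately yields a nonconstant meromorphic $g$ of finite order, all zeros exactly $k$-fold, with $\overline{E}_g(0)=\overline{E}_{g^{(k)}}(b)\subset\overline{E}_{g^{(k+1)}}(0)$ provided $g^{(k)}\not\equiv b$.

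The gap is the endgame, and it is twofold. First, the value-distribution step is the entire content of the theorem, and you only state a plan (``should force the characteristic of $g$ to collapse''). This step is precisely the hard lemma of the subject: in the entire case it is \cite[Lemma 5]{1} (= \lemref{lemma2} here), whose proof is delicate enough that it carries the restriction $k\ne2$, and the meromorphic version needed for Theorem PZ is the core of \cite{11}; nothing about it is routine, and its correct conclusion is not a contradiction but the normal form $g(\zeta)=\frac{b}{k!}(\zeta-c)^k$. Second, and more seriously, your proposed way of finishing cannot work. The polynomial $g_0(\zeta)=\frac{b}{k!}(\zeta-c)^k$ satisfies every property you established for $g$ (for it, $g_0^{(k)}\equiv b$, so the sharing conditions degenerate rather than fail), so no second-fundamental-theorem computation can ``contradict non-constancy'' outright; and your plan to kill this degenerate case by ``incompatibility with the pre-limit equivalence'' is empty: Hurwitz gives nothing when $g_n^{(k)}-b$ converges to the zero function, and the strict bound $0<|f_n^{(k+1)}|$ evaporates in the limit because $g_n^{(k+1)}=\rho_nf_n^{(k+1)}\to0$ regardless. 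The degenerate limit must instead be excluded by the spherical normalization of Pang's lemma, and here your setup discards the needed freedom: you froze $g^\#(0)=1$, whereas \lemref{lemma1} gives $g^\#(\zeta)\le g^\#(0)=kA+1$ for \emph{any} admissible $A$, and the paper's proof of \lemref{lemma5} chooses $A$ so large that $kA+1>\max\bigl\{|b|+1,\,k\cdot k!/|b|\bigr\}$, which is exactly what rules the polynomial out. With your normalization no contradiction is available at all: for $k=1$ and $|b|=1$, the function $g(\zeta)=b\zeta$ satisfies everything you derived, including $g^\#(\zeta)\le g^\#(0)=1$. So both the core lemma and the mechanism of the final contradiction are missing, and the latter is mis-aimed in a way that cannot be repaired without restoring the $kA+1$ normalization.
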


Then, in \cite{1}, J.M Chang, M.L. Fang, and L. Zalcman proved the
following result.

\begin{TheoremCFZ1*}\cite[Theorem 4]{1}
Let $\CF$ be a family of functions holomorphic on a domain
$D\subset\C$. Let $k\ge2$ be an integer, and let $h(z)\ne0$ be a
function analytic in $D$. Assume also that the following two
conditions hold for every $f\in\CF$:
\begin{enumerate} \item[(a)] $f(z)=0\Longrightarrow f'(z)=h(z)$;
and\item[(b)] $f'(z)=h(z)\Longrightarrow|f^{(k)}(z)|\le c$, where
$c$ is a constant.
\end{enumerate}
Then $\CF$ is normal on $D$.
\end{TheoremCFZ1*}

And in \cite{4}, we replaced the condition $h(z)\ne0$ with
$h(z)\not\equiv0$ and obtained the following result.

\begin{TheoremLN*}
Let $\CF$ be a family of functions holomorphic on a domain
$D\subset\C$. Let $k\ge2$ be an integer, and let
$h(z)(\not\equiv0)$ be a holomorphic function on $D$, all of whose
zeros have multiplicity at most $k-1$,  that has no common zeros
with any $f\in\CF$. Assume also that the following two conditions
hold for every $f\in\CF$:
\begin{enumerate} \item[(a)] $f(z)=0\Longrightarrow f'(z)=h(z)$
and\item[(b)] $f'(z)=h(z)\Longrightarrow|f^{(k)}(z)|\le c$, where
$c$ is a constant.
\end{enumerate}
Then $\CF$ is normal on $D$.
\end{TheoremLN*}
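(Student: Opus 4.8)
The plan is to prove normality locally, so I fix $z_0\in D$ and argue that $\CF$ is normal at $z_0$; two cases arise according to whether $h(z_0)\neq0$ or $h(z_0)=0$. Since $h\not\equiv0$, its zeros are isolated, so the case $h(z_0)\neq0$ covers an open dense set. If $h(z_0)\neq0$, then $h$ is holomorphic and nonvanishing on a small disk $D'\ni z_0$, and conditions (a), (b) persist on $D'$; hence Theorem CFZ1 applies verbatim to $\CF|_{D'}$ and yields normality at $z_0$. The entire content therefore reduces to establishing normality at each isolated zero of $h$, and this is exactly where the multiplicity hypothesis will be used.

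So fix a zero $z_0$ of $h$ of multiplicity $m$ with $1\le m\le k-1$, write $h(z)=(z-z_0)^m u(z)$ with $u(z_0)\neq0$, and normalize $z_0=0$. Since $h$ and every $f\in\CF$ have no common zeros, $f(0)\neq0$. Assuming for contradiction that $\CF$ is not normal at $0$, I would apply the rescaling (Zalcman--Pang) lemma with the exponent matched to the local order of $h$: choose $z_n\to0$, $\rho_n\to0^+$ and $f_n\in\CF$ so that $g_n(\zeta):=\rho_n^{-(m+1)}f_n(z_n+\rho_n\zeta)\to g(\zeta)$ locally uniformly, with $g$ a nonconstant entire function of finite order satisfying $g^{\#}(\zeta)\le g^{\#}(0)=1$. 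The exponent $m+1$ is forced by condition (a): at a zero of $f_n$ one has $f_n'=h\approx u(0)(z_n+\rho_n\zeta)^m$, so that $g_n'(\zeta)=\rho_n^{-m}f_n'(z_n+\rho_n\zeta)$ stays bounded and nonzero in the limit.

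The decisive computation is $g_n^{(k)}(\zeta)=\rho_n^{\,k-m-1}f_n^{(k)}(z_n+\rho_n\zeta)$: the hypothesis $m\le k-1$ makes the power $\rho_n^{\,k-m-1}$ bounded (indeed $\to0$ when $m<k-1$), so condition (b), which bounds $|f_n^{(k)}|$ wherever $f_n'=h$, survives the rescaling. Passing to the limit with Hurwitz's theorem, I would obtain that $g$ satisfies $g(\zeta)=0\Rightarrow g'(\zeta)=b(\zeta-\zeta_*)^m$ (with $b=u(0)\neq0$, the shift $\zeta_*$ recording $\lim z_n/\rho_n$) from (a), and $g'(\zeta)=b(\zeta-\zeta_*)^m\Rightarrow g^{(k)}(\zeta)=0$ from (b) in the subcase $m<k-1$; in the borderline subcase $m=k-1$ one inherits only the bound $|g^{(k)}|\le c$ at such points. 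One must also separate according to the behaviour of the centres, namely whether $z_n/\rho_n$ stays bounded or tends to $\infty$; in the escaping regime $h(z_n+\rho_n\,\cdot)$ behaves like a nonvanishing factor and a separate, simpler rescaling (reflecting that $h$ is locally nonzero there) applies.

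The main obstacle is the final rigidity step: showing that no nonconstant entire function $g$ of finite order can satisfy the derived relations, thereby contradicting the Zalcman--Pang conclusion. I would run this through value-distribution theory applied to $\psi:=g'-b(\zeta-\zeta_*)^m$. When $m<k-1$ one has $\psi^{(k-1)}=g^{(k)}$, so condition (b) says every zero of $\psi$ is a zero of $\psi^{(k-1)}$; this forces all zeros of $\psi$ to have high multiplicity, severely restricting $N(r,1/\psi)$, and combined with the finite order supplied by the bounded spherical derivative and the zero-constraint inherited from (a), it should pin $g$ down to a form incompatible with $g^{\#}(0)=1$. I expect the genuinely delicate point to be the borderline multiplicity $m=k-1$: there $\psi^{(k-1)}=g^{(k)}-b(k-1)!$, so (b) yields only $|\psi^{(k-1)}+b(k-1)!|\le c$ at zeros of $\psi$ rather than vanishing, the clean shared-zero structure is lost, and a more quantitative growth estimate for $g$ is required to close the argument.
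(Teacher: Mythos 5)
Your reduction to the zeros of $h$ via Theorem CFZ1 is fine, but the rescaling step that carries the whole proof is not legitimate. By condition (a) and the no-common-zeros hypothesis, every zero of every $f\in\CF$ is \emph{simple}: $f(z)=0$ forces $f'(z)=h(z)\ne0$. The Pang--Zalcman lemma (Lemma 1 of the paper) with exponent $\alpha$ requires all zeros of the family to have multiplicity at least $k$ and then allows $0\le\alpha\le k$ (with the bound $|f^{(k)}|\le A$ at zeros); for a family with only simple zeros the admissible range is therefore $0\le\alpha\le1$. Your choice $\alpha=m+1\ge2$ is outside the scope of the lemma, and the remark that ``the exponent $m+1$ is forced by condition (a)'' conflates what you would like the limit $g$ to satisfy with what the lemma actually delivers. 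Even heuristically, boundedness of $g_n'(\zeta_n)=\rho_n^{-m}h(z_n+\rho_n\zeta_n)$ at zeros presupposes $h(z_n+\rho_n\zeta_n)=O(\rho_n^m)$, i.e.\ that $z_n/\rho_n$ stays bounded, which is not known at the moment you invoke the lemma; in the escaping regime the normalization $\rho_n^{-(m+1)}$ is simply wrong. (The standard repair, used in \cite{4}, is to rescale the quotient family $f_n/h$ with $\alpha=1$, for which $(f_n/h)'=1$ at every zero.)

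Second, the endgame is missing. Even granting the relations $g=0\Rightarrow g'=b(\zeta-\zeta_*)^m$ and $g'=b(\zeta-\zeta_*)^m\Rightarrow g^{(k)}=0$, you prove no rigidity statement: you say value distribution ``should pin $g$ down'', and you explicitly concede that the borderline case $m=k-1$ --- genuinely the hard case, since (b) then yields only a bound rather than vanishing --- remains open. No lemma of the required form (zeros of $g'$ minus a nonconstant polynomial, rather than minus a nonzero constant as in Lemma \ref{lemma2}) is available in the paper or in \cite{1}, so this is a real gap, not a routine citation. For contrast, the paper proves the stronger Theorem \ref{thm1} (Theorem LN with no multiplicity restriction on $h$) by an entirely elementary route that avoids rescaling at the zeros of $h$: writing $h(z)=z^{m}b(z)$ at a zero of order $m$ and assuming $f_n\Rightarrow\infty$ on the punctured disc, it factors $f_n(z)=t_n(z)\prod_{i=1}^{k_n}\bigl(z-\alpha_i^{(n)}\bigr)$ over the (simple) zeros, forms
$$M_n(z)=\frac{t_n(z)}{b(z)}\left[\prod_{i=1}^{k_n}\bigl(z-\alpha_i^{(n)}\bigr)\right]'-z^{m},$$
which vanishes at each $\alpha_j^{(n)}$ by condition (a), and then uses a quarter-plane estimate, Lucas's theorem and Rouch\'e's theorem to show $M_n$ has only $k_n-1$ zeros in the disc --- a contradiction. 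That argument uses condition (b) only through Theorem CFZ1 and never needs the multiplicity hypothesis, which is exactly why the restriction in Theorem LN can be dropped.
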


We now pose the following question: can the restriction for the
zeros of $h(z)$ with multiplicity at most $k-1$ be dropped? In
this paper, we continue to study the above problem and obtain an
affirmative answer.

\begin{theorem}\label{thm1} Let $\CF$ be a family of functions holomorphic on a domain
$D\subset\C$. Let $k\ge2$ be an integer, and let
$h(z)(\not\equiv0)$ be a holomorphic function on $D$   that  has
no common zeros with any $f\in\CF$. Assume also that the following
two conditions hold for every $f\in\CF$:
\begin{enumerate} \item[(a)] $f(z)=0\Longrightarrow f'(z)=h(z)$
and\item[(b)] $f'(z)=h(z)\Longrightarrow|f^{(k)}(z)|\le c$, where
$c$ is a constant.
\end{enumerate}
Then $\CF$ is normal on $D$.
\end{theorem}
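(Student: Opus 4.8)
\emph{The plan} is a proof by contradiction via the Zalcman--Pang rescaling method, preceded by a localization that disposes of everything except the genuinely new case. Since normality is local, fix $z_0\in D$ and work on a disc $\Delta$ about $z_0$. If $h(z_0)\ne0$, shrink $\Delta$ so that $h\ne0$ there and apply Theorem~CFZ1; if $0<\ord_{z_0}h\le k-1$, shrink $\Delta$ so that $z_0$ is the only zero of $h$ in $\Delta$ and apply Theorem~LN. Thus the only remaining case, which is the new content, is a zero of order $m\ge k$; normalize $z_0=0$ and write $h(z)=z^{m}h_1(z)$ with $h_1(0)\ne0$. I record two facts used throughout: each $f\in\CF$ has $f(0)\ne0$, and every zero of every $f\in\CF$ is simple (at a zero $z_1$ of $f$ one has $f'(z_1)=h(z_1)\ne0$). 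Shrinking $\Delta$ so that $0$ is its only zero of $h$, Theorem~CFZ1 applies on every subdisc of $\Delta\setminus\{0\}$; hence $\CF$ is normal on $\Delta\setminus\{0\}$, i.e. quasinormal on $\Delta$ with $0$ as its only possible irregular point.

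Assume, for contradiction, that $\CF$ fails to be normal at $0$. Applying the Pang--Zalcman lemma (the ZP2 form) with exponent $\alpha=1$ produces $f_n\in\CF$, $z_n\to0$, $\rho_n\to0^{+}$ with $g_n(\zeta)=\rho_n^{-1}f_n(z_n+\rho_n\zeta)\to g$ locally uniformly, where $g$ is a nonconstant entire function with $g^{\#}(\zeta)\le g^{\#}(0)=1$. Condition (a) passes to the limit as $g(\zeta)=0\Rightarrow g'(\zeta)=\lim_n h(z_n+\rho_n\zeta)=0$ (because $h(0)=0$), so all zeros of $g$ are multiple; and a Hurwitz argument applied to $f_n'-h$ (which converges to $g'$) locates, near each zero of $g'$, points where $f_n'=h$, at which (b) gives $|g_n^{(k)}|\le\rho_n^{\,k-1}c\to0$, whence $g'(\zeta)=0\Rightarrow g^{(k)}(\zeta)=0$. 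Thus every zero of $g'$ has multiplicity $\ge k$ and every zero of $g$ multiplicity $\ge k+1$. The essential difficulty now surfaces: because $h$ \emph{vanishes} at $0$, the limiting target of $f'$ at the zeros is $0$ rather than a nonzero constant, so if the extracted limit $g$ happens to be \emph{zero--free} (forcing $g=e^{P}$ with $\deg P\le2$, as $g^{\#}$ is bounded) then both derived implications are vacuous and no contradiction is available at this single scale. This is exactly the phenomenon that the multiplicity hypothesis of Theorem~LN was designed to avoid, and removing it requires a two--scale (renormalization) analysis adapted to the high--order zero of $h$.

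To carry this out I would distinguish two cases. If $g$ has a zero, then zeros of $f_n$ accumulate at $0$; I would renormalize a second time, now \emph{centred at these accumulating zeros} and with the exponent $\alpha=m+1$ that is forced by $f'=h\sim a\zeta^{m}$ (with $a=h_1(0)$), checking that the rescaled functions converge to a genuine nonconstant holomorphic $g_{*}$ (rather than to $\infty$) precisely because zeros are present. The limit then satisfies the interpolation condition $g_{*}(\zeta)=0\Rightarrow g_{*}'(\zeta)=a\zeta^{m}$, while (b) is now vacuous since $k-m-1<0$; the contradiction must therefore be extracted from the interplay of this interpolation condition, the normalization $g_{*}^{\#}\le g_{*}^{\#}(0)$, and the quasinormal limit of $\{f_n\}$ on $\Delta\setminus\{0\}$, which constrains $g_{*}$ near $\infty$ and rules out the otherwise admissible polynomial solutions $g_{*}=\frac{a}{m+1}\zeta^{m+1}+\mathrm{const}$. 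If instead $g$ is zero--free (the exponential case), then $f_n'\ne h$ on the $\rho_n$--scale, so I would pass to a coarser scale on which $f_n'=h$ does have solutions --- such solutions must exist because $h$ is small near $0$ while $f_n'$ sweeps large values --- and apply (b) there to bound $f_n^{(k)}$, contradicting the growth of $f_n$ recorded by the exponential $g$.

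I expect the main obstacle to be this two--scale reconciliation at a zero of $h$ of order $\ge k$: the scale $\rho_n$ on which non--normality manifests is incompatible with the scale (governed by $m$) on which condition (b) becomes effective, so neither (a) nor (b) can be used in isolation. The clustering subcase, where (b) is genuinely vacuous and the contradiction rests on the geometry of the limit---its bounded spherical derivative together with the quasinormal behaviour on the punctured disc, the ``geometrical approach'' of the title, supplied by the N~Lemma---is where I anticipate the real work, and it is exactly the situation excluded a priori in Theorems~CFZ1 and~LN.
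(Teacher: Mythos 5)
Your localization (CFZ1 off the zeros of $h$, Theorem LN at zeros of order $\le k-1$, so only a zero of order $m\ge k$ remains) and the first Pang--Zalcman rescaling with $\alpha=1$ are sound, but the proof has a genuine gap precisely where you yourself place ``the real work'': neither terminal subcase is actually carried out. In the subcase where $g$ has zeros, you perform a second rescaling with exponent $m+1$, observe (correctly) that condition (b) is then vacuous because $k-m-1<0$, and state that the contradiction ``must be extracted from the interplay'' of the interpolation property $g_*(\zeta)=0\Rightarrow g_*'(\zeta)=a\zeta^m$, the spherical-derivative normalization, and the quasinormal behaviour of $\{f_n\}$ on the punctured disc --- but no such extraction is given: the admissible limits $g_*$ are never classified and nothing is shown to rule them out, so this is a plan rather than an argument. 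In the zero-free subcase, the claim that solutions of $f_n'=h$ ``must exist because $h$ is small near $0$ while $f_n'$ sweeps large values'' is unsupported; a zero-free limit at scale $\rho_n$ gives no control of $f_n'$ at coarser scales, and you neither identify the coarser scale nor prove existence of solutions of $f_n'=h$ there. There is also a smaller error en route: from $g'(\zeta_0)=0\Rightarrow g^{(k)}(\zeta_0)=0$ you conclude that every zero of $g'$ has multiplicity $\ge k$, which does not follow, since the intermediate derivatives $g'',\dots,g^{(k-1)}$ are not controlled at $\zeta_0$.

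For contrast, the paper's proof avoids rescaling entirely and thereby sidesteps the scale mismatch you identify; it is elementary. With $z_0=0$, $h(z)=z^{\ell}b(z)$, $b\ne0$, quasinormality (from CFZ1) reduces matters to the case $f_n\Rightarrow\infty$ on $\Delta'(0,\delta)$; then for large $n$ the function $f_n$ has $k_n\ge1$ \emph{simple} zeros $\alpha_1^{(n)},\dots,\alpha_{k_n}^{(n)}$ accumulating at $0$ (simplicity and $k_n\ge1$ follow from (a), the no-common-zeros hypothesis, and the minimum principle), and one writes $f_n(z)=t_n(z)\prod_{i=1}^{k_n}\bigl(z-\alpha_i^{(n)}\bigr)$ with $t_n\ne0$. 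The auxiliary function $M_n(z)=\frac{t_n(z)}{b(z)}\Bigl[\prod_{i=1}^{k_n}\bigl(z-\alpha_i^{(n)}\bigr)\Bigr]'-z^{\ell}$ vanishes, by condition (a), at every $\alpha_j^{(n)}$, hence has at least $k_n$ zeros; on the other hand, a quarter-plane estimate shows $\frac{t_n}{b}\bigl[\prod_i(z-\alpha_i^{(n)})\bigr]'\Rightarrow\infty$ on the punctured disc, so comparing $M_n$ with this dominant term on $|z|=\eta$ via Rouch\'e, together with Gauss--Lucas (the derivative of the product has exactly $k_n-1$ zeros, all near $0$), shows $M_n$ has exactly $k_n-1$ zeros --- a contradiction. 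Note in particular that condition (b) never reappears at the zero of $h$: it is consumed entirely by CFZ1, whereas your scheme keeps trying to reuse (b) at scales where, as you observe, it cannot act.
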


Also in \cite{1},  the case for the $k-$th derivative was
considered  and the following result was proved .

\begin{TheoremCFZ2*}\cite[Theorem 1]{1}
Let $\CF$ be a family of functions holomorphic on a domain
$D\subset\C$, all of whose zeros have multiplicity at least $k$,
where $k\ne2$ is a positive integer; and let $h(z)\ne0$ be a
function analytic in $D$. Assume also that the following two
conditions hold for every $f\in\CF$:
\begin{enumerate} \item[(a)] $f(z)=0\Longrightarrow f^{(k)}(z)=h(z)$;
and\item[(b)] $f^{(k)}(z)=h(z)\Longrightarrow|f^{(k+1)}(z)|\le c$,
where $c$ is a constant.
\end{enumerate}
Then $\CF$ is normal on $D$.
\end{TheoremCFZ2*}

For the case $k=2$,   the following result was obtained.

\begin{TheoremCFZ3*}\cite[Theorem 3]{1}
Let $\CF$ be a family of functions holomorphic on a domain
$D\subset\C$, all of whose zeros are multiple, where $s\ge4$ is an
even integer; and let $h(z)\ne0$ be a function analytic in $D$.
Assume also that the following two conditions hold for every
$f\in\CF$:
\begin{enumerate} \item[(a)] $f(z)=0\Longrightarrow f''(z)=h(z)$;
and\item[(b)] $f''(z)=h(z)\Longrightarrow|f'''(z)|+|f^{(s)}(z)|\le
c$, where $c$ is a constant.
\end{enumerate}
Then $\CF$ is normal on $D$.
\end{TheoremCFZ3*}

In view of the improvement of Theorems CFZ1 and LN via Theorem 1,
the question that naturally arises concerning Theorem CFZ2 and
CFZ3, is whether the condition $h(z)\ne0$, $z\in D$, can be
relaxed to ``$h \not\equiv0$ ". It turns out that the answer is
negative in both cases. It is negative even if $h$ has no common
zero with any $f\in\mathcal F$ (like  in Theorem 1). To construct
the first example, concerning Theorem CFZ2, we first need to
mention the following famous result of F. Lucas.

\begin{theoremLu*} \cite{5}, \cite[p.~22]{6}
Let $P(z)$ be a nonconstant polynomial. Then all the zeros of
$P'(z)$ lie in the convex hull $H$ of the zeros of $P(z)$.
Moreover, there are no zeros of $P'(z)$ on the boundary of $H$,
unless this zero is a multiple zero of $P(z)$ or   the zeros of
$P(z)$ are colinear.
\end{theoremLu*}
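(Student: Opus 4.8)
The plan is to read off the location of the zeros of $P'$ from the partial-fraction expansion of the logarithmic derivative. Writing $P(z)=a\prod_{j=1}^{n}(z-z_j)$, where $z_1,\dots,z_n$ are the zeros of $P$ listed with multiplicity, one has
\[
\frac{P'(z)}{P(z)}=\sum_{j=1}^{n}\frac{1}{z-z_j}
\]
at every point that is not a zero of $P$. The entire argument consists in extracting the position of a zero $w$ of $P'$ from this identity.

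First I would dispose of the case in which $w$ is itself a zero of $P$: then $w=z_j$ for some $j$, so $w\in H$ trivially. Note moreover that if such a $w$ is a \emph{simple} zero of $P$, then writing $P(z)=(z-w)Q(z)$ with $Q(w)\ne0$ gives $P'(w)=Q(w)\ne0$; hence a common zero of $P$ and $P'$ is necessarily a multiple zero of $P$. This already accounts for the first exceptional possibility in the ``moreover'' part.

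The heart of the argument is the case where $w$ is a zero of $P'$ with $P(w)\ne0$, so that $\sum_{j}1/(w-z_j)=0$. Multiplying each summand by $\ov{w-z_j}/\ov{w-z_j}$ rewrites this as $\sum_{j}(\ov w-\ov{z_j})/|w-z_j|^2=0$, so, setting $\lambda_j=|w-z_j|^{-2}>0$ and conjugating (the $\lambda_j$ being real), $w\bigl(\sum_j\lambda_j\bigr)=\sum_j\lambda_j z_j$. Dividing by the positive number $\sum_j\lambda_j$ exhibits
\[
w=\sum_{j=1}^{n}\mu_j z_j,\qquad \mu_j=\frac{\lambda_j}{\sum_i\lambda_i}>0,\quad \sum_j\mu_j=1,
\]
i.e. $w$ is a convex combination of $z_1,\dots,z_n$ with \emph{strictly positive} weights. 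In particular $w\in H$, which together with the previous paragraph proves the first assertion.

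Finally, the ``moreover'' statement will follow from the strict positivity of the weights $\mu_j$: a point expressible as a convex combination of the $z_j$ with all weights positive lies in the \emph{relative interior} of $H$. When the $z_j$ are not colinear, $H$ is a convex polygon with nonempty interior, and its relative interior coincides with its topological interior; hence such a $w$ cannot lie on $\partial H$. Combining the two cases, a zero $w$ of $P'$ lying on $\partial H$ either satisfies $P(w)=0$---and is then a multiple zero of $P$ by the simple-zero remark above---or satisfies $P(w)\ne0$, in which case it is a strict convex combination of the $z_j$ and these must therefore be colinear. I expect the only delicate point to be the handling of degenerate configurations (all $z_j$ equal, or colinear), where ``boundary'' and ``interior'' must be read through the relative interior; once this is made precise the argument is immediate.
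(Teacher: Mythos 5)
Your proof is correct. Note that the paper itself gives no proof of Theorem Lu --- it is quoted as a classical result, with references to Lucas's paper and to Marden's book --- so the only meaningful comparison is with the classical argument, and yours is essentially that argument: the partial-fraction expansion of $P'/P$, the rewriting of $\sum_j 1/(w-z_j)=0$ to exhibit $w$ as a convex combination of the $z_j$ with strictly positive weights, and the observation that a common zero of $P$ and $P'$ must be a multiple zero of $P$. The one step you delegate to convex geometry --- that a strictly positive convex combination of the $z_j$ lies in the relative interior of their convex hull --- is a standard fact, but if you want the ``moreover'' part to be as self-contained as the rest, you can bypass it with a supporting-line argument: if $w\in\partial H$ and the zeros are not colinear, pick a supporting line $L$ of $H$ through $w$; after a rotation and translation we may assume $L$ is the real axis and $\operatorname{Im} z_j\le 0$ for all $j$; then $0=\operatorname{Im} w=\sum_j\mu_j\operatorname{Im} z_j$ with all $\mu_j>0$ forces $\operatorname{Im} z_j=0$ for every $j$, i.e., the zeros all lie on $L$, a contradiction. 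This yields the boundary statement directly, with no appeal to relative interiors, and it handles the degenerate configurations you flag at the end automatically (the colinear case is simply the excluded alternative).
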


\begin{examp}
Let $r\ge1$ and $k\ge3$ be integers, $D=\Delta$ be the unit disc
and $h(z)=z^r$. Define
$$f_n(z)=a_n\left(z^\ell-\DF1{n^\ell}\right)^k,$$
where $\ell=k+r$ and $a_n=\DF{n^{(k-1)\ell}}{k!\ell^k}$.

We have$$f_n(z)=a_n\prod\limits_{j=1}^\ell\left(z-\alpha^{(n)}_j
\right)^k,$$where $\alpha^{(n)}_j=\DF{\exp\left(i\frac{2\pi j}\ell
\right)}n$, for $1\le j\le\ell$.

By calculation,
\begin{align*}f_n^{(k)}\left(\alpha^{(n)}_j\right)&=k!a_n
\prod\limits_{t=1,t\ne
j}^\ell\left(\alpha^{(n)}_j-\alpha^{(n)}_t\right)^k=k!a_n\left[
\left(z^\ell-\DF1{n^\ell}\right)'\Bigg|_{z=\alpha^{(n)}_j}\right]^k\\
&= k!a_n\ell^k\left(\alpha^{(n)}_j\right)^{k(\ell-1)}.\end{align*}
Thus,\begin{equation}\label{1}\arg\left[f_n^{(k)}\left(\alpha^{(n)}_j\right)\right]=
(\ell-1)k\cdot\DF{2\pi j}\ell=-\DF{2\pi kj}\ell=\DF{2\pi
ri}{\ell}=\arg\left[z^r\Big|_{z=\alpha^{(n)}_j}\right].
\end{equation}
Here the equalities are modulo $2\pi$, and we used in the last equality that $r+k=\ell$.

We have
\begin{equation}\label{2}\left|f_n^{(k)}\left(\alpha^{(n)}_j\right)\right|=\DF{k!\ell^kn^{\ell(k-1)}}
{k!\ell^k}\left(\DF1n\right)^{k(\ell-1)}=\left(\DF1n\right)^r=\left|z^r\right|
\Bigg|_{z=\alpha^{(n)}_j}.
\end{equation}
From \eqref{1} and \eqref{2} we have that $f_n(z)=0\Longrightarrow
f^{(k)}_n(z)=h(z)$, i.e., assumption (a) of Theorem CFZ2 holds.

In order to confirm (b) of Theorem CFZ2, set
$$\widetilde{f}_n(z)=f_n(z)-\DF{z^\ell}{\ell(\ell-1)\cdots(r+1)}.$$
We have $f^{(k)}_n(z)=h(z)\Longleftrightarrow\widetilde{f}^{(k)}_n(z)=0$.

Now
\begin{equation}
\label{3}\widetilde{f}_n(z)=0
\Longleftrightarrow\DF{n^{k(\ell-1)-r}}{k!\ell^k}\left(z^\ell-\DF1{n^\ell}\right)^k
=\DF{z^\ell}{\ell(\ell-1)\cdots(r+1)}.
\end{equation}

Suppose by negation that there exist a sequence
$\{z_n\}^\infty_{n=1}$ $(z_n\to0)$ and a sequence of~natural
numbers $\{k_n\}^\infty_{n=1}$
$(k_n\underset{n\to\infty}\longrightarrow\infty)$, such that
$\widetilde{f}_{k_n}(z_n)=0$. Then since\newline
$\DF{(k_nz_n)^\ell-1}{(k_nz_n)^\ell}\underset{n\to\infty}\longrightarrow1$,
from \eqref{3} we get
\begin{equation}
\label{4}\DF{k_n^{(k-1)\ell}(k_nz_n)^{k\ell}}{k_n^{k\ell}z^\ell_n}\underset{n\to\infty}\longrightarrow
\DF{k!\ell^k}{\ell(\ell-1)\cdots(r+1)}.
\end{equation}
But the left hand side of \eqref{4} tends to $\infty$, as
$n\to\infty$, a contradiction.

We deduce that there exists some $0<C_1<\infty$, such that every
zero $z_n$ of $\widetilde{f}_n$ satisfies $|z_n|\le\DF{C_1}n$. By
Theorem Lu, we have also $|\widehat{z}_n|\le\DF{C_1}n$ for every
$\widehat{z}_n$, which is a zero of $\widetilde{f}^{(k)}_n$. But
those $\{\widehat{z}_n\}$ are exactly the points where
$f^{(k)}_n(z)=h(z)$.

Hence $f^{(k)}_n(z)=h(z)$ implies that $|z|\le\DF{C_1}n$, and we
have only to prove the following claim.

\begin{claim}\label{claim1} There exists $0<C<\infty$, such that
$|z|\le\DF{C_1}n$ implies $|f^{(k+1)}_n(z)|\le C$.\end{claim}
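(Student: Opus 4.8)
The plan is to exploit the self-similar scaling structure of $f_n$. First I would introduce the rescaled variable $w=nz$. Since $z^\ell-n^{-\ell}=n^{-\ell}(w^\ell-1)$ and $a_n=n^{(k-1)\ell}/(k!\ell^k)$, a direct substitution shows that
\[
f_n(z)=n^{-\ell}\,g(nz),\qquad g(w):=\frac{(w^\ell-1)^k}{k!\ell^k},
\]
where the crucial point is that $g$ is a \emph{fixed} polynomial, independent of $n$. This reduces the whole question to the behaviour of a single function on a compact set.

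Next I would differentiate $k+1$ times. By the chain rule each derivative in $z$ produces a factor $n$, so that $f_n^{(m)}(z)=n^{m-\ell}g^{(m)}(nz)$ for every $m$. Taking $m=k+1$ and using $\ell=k+r$ yields
\[
f_n^{(k+1)}(z)=n^{1-r}\,g^{(k+1)}(nz).
\]

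Finally I would translate the hypothesis $|z|\le C_1/n$ into $|w|=|nz|\le C_1$, a fixed compact disk. Since $g^{(k+1)}$ is a polynomial, it is bounded there, say by $M:=\max_{|w|\le C_1}|g^{(k+1)}(w)|$. Because $r\ge1$, the exponent satisfies $1-r\le0$, whence $n^{1-r}\le1$ for all $n\ge1$, and therefore $|f_n^{(k+1)}(z)|\le M$ uniformly in $n$ and in the relevant $z$. The claim then holds with $C=M$.

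I do not expect a genuine obstacle here: once the scaling $w=nz$ is identified, the estimate is immediate from the compactness of $\{|w|\le C_1\}$ together with the fact that $g$ is $n$-independent. The only step requiring care is checking that the powers of $n$ collapse to a nonpositive exponent, which is precisely where the relation $\ell=k+r$ (equivalently $r\ge1$) enters; had $r$ been allowed to be $0$ the exponent would have been positive and the bound would fail.
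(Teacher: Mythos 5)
Your proof is correct, and it takes a genuinely cleaner route than the paper. The paper argues by brute force: it expands $f_n(z)=\frac{n^{(k-1)\ell}}{k!\ell^k}\sum_{j=0}^k\binom{k}{j}(-1)^{k-j}z^{\ell j}n^{-\ell(k-j)}$ via the binomial theorem, differentiates $k+1$ times term by term (only the terms with $j\ge1$ survive), and bounds each term under $|z|\le C_1/n$, at which point all powers of $n$ collapse into the common factor $n^{k+1-\ell}\le1$. Your scaling identity $f_n(z)=n^{-\ell}g(nz)$, with the fixed polynomial $g(w)=(w^\ell-1)^k/(k!\ell^k)$, packages that entire computation into one line: differentiation gives $f_n^{(k+1)}(z)=n^{1-r}g^{(k+1)}(nz)$, and the hypothesis $|z|\le C_1/n$ becomes boundedness of the polynomial $g^{(k+1)}$ on the fixed disk $\{|w|\le C_1\}$. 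Both arguments hinge on exactly the same arithmetic fact, namely $k+1-\ell=1-r\le0$, i.e. $r\ge1$; but your version isolates it structurally (the example is self-similar under the blow-up $w=nz$, which is precisely the scale at which normality fails), whereas the paper's term-by-term estimate has the minor advantage of yielding an explicit constant $C$ in terms of $C_1$, $k$, and $\ell$. Either argument is a complete proof of the Claim.
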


\begin{proof} We have
$f_n(z)=\DF{n^{(k-1)\ell}}{k!\ell^k}\left(z^\ell-\DF1{n^\ell}\right)^k=
\DF{n^{(k-1)\ell}}{k!\ell^k}\sum\limits_{j=0}^k\binom{k}{j}z^{\ell
j}\left(\DF1n\right)^{\ell(k-j)}(-1)^{k-j}$. Thus, since $\ell
j\ge k+1$ only for $j\ge1$, we get
that$$f^{(k+1)}_n(z)=\DF{n^{(k-1)\ell}}{k!\ell^k}\sum\limits_{j=1}^k\binom{k}{j}
\left(\DF1n\right)^{\ell k-\ell j}(-1)^{k-j}\ell j(\ell
j-1)\cdots(\ell j-k-1) z^{\ell j-k-1}.$$Thus, if
$|z|\le\DF{C_1}n$, then
$$\begin{aligned}|f^{(k+1)}_n(z)|&\le\DF{n^{(k-1)\ell}}{k!\ell^k}\sum\limits_{j=1}^k\binom{k}{j}
C^{\ell j-k-1}_1\ell j(\ell j-1)\cdots(\ell j-k-1)n^{k+1-\ell j}\cdot n^{\ell j-\ell k}\\
\\&=\DF{n^{k+1-\ell}}{k!\ell^k}\sum\limits_{j=1}^k\binom{k}{j}
C^{\ell j-k-1}_1\ell j(\ell j-1)\cdots(\ell j-k-1)\le
C,\end{aligned}
$$where $C=\DF1{k!\ell^k}\sum\limits_{j=1}^k\binom{k}{j}
C^{\ell j-k-1}_1\ell j(\ell j-1)\cdots(\ell j-k-1)$. (Here we used
that $k+1-\ell\le0$.)    The Claim  is proved.
\end{proof}

Hence, $\{f_n\}$ with $h$ satisfy (a) and (b) of Theorem CFZ2, but $\{f_n\}$ is not
normal at $z=0$.

Observe that when $k=1$, then $a_n=\DF1{\ell}\not\to\infty$, and
we do not get a non-normal family, as expected by Theorem 1.
\end{examp}
The following example shows that the condition $h(z)\ne0$ is
essential also for\linebreak Theorem~CFZ3.
\begin{examp} (cf.~\cite[Ex.~4]{1}
Let $s\ge4$ be an even integer and consider the family $\mathcal F=\{f_n(z)\}^\infty_{n=1}$,
$$f_n(z)=\DF{n^s}{2s^2}\left(z^s-\DF1{n^s}\right)^2\quad\text{on}\quad
\Delta.$$Let $h(z)=z^{s-2}$.

We have that
$$f_n(z)=\DF{n^s}{2s^2}\prod\limits_{j=1}^s\left(z-\alpha
^{(n)}_j\right)^2,$$ where $\alpha^{(n)}_j=\DF{\exp(i2\pi j/s)}n$,
$1\le j\le s$.

By calculation we have
\begin{equation}
\label{5}f''_n(z)=\DF{n^s}{s}
\left((2s-1)z^{s}-\DF{(s-1)}{n^s}\right)z^{s-2},
\end{equation}
\begin{align}
f'''_n(z)&=\DF{n^s}{s}\left[(2s-1)(2s-2)z^{s}-\DF{(s-1)(s-2)}{n^s}\right]z^{s-3}\label{6}\\
&=\frac{n^s}{s}(s-1)z^{s-3}\left[(4s-2)z^s-\frac{s-2}{n^s}\right],\nonumber
\end{align}
and
\begin{equation}
\label{7}f^{(s)}_n(z)=\DF{n^s}{s}\left[(2s-1)(2s-2)\cdots(s+1)z^{s}-\DF{(s-1)!}{n^s}\right].
\end{equation}
Now, if $f_n(z)=0$, then $z=\alpha^{(n)}_j$ for some $1\le j\le s$, and
thus $z^s=\DF1{n^s}$ and by \eqref{5}, $f''_n(z)=z^{s-2}=h(z)$.

If $f''_n(z)=z^{s-2}=h(z)$, then by \eqref{5}, $z=0$ or $z=\alpha^{(n)}_j$,
$1\le j\le s$. By \eqref{6} and \eqref{7}, we get
\begin{equation}
\label{8}f^{(3)}_n(0)=0,\quad f^{(s)}_n(0)=-\DF{(s-1)!}{n^s}
\end{equation}
and
\begin{equation}
\label{9}f^{(3)}_n\left(\alpha^{(n)}_j\right)=3(s-1)\DF1{n^{s-3}},\quad
f^{(s)}_n\left(\alpha^{(n)}_j\right)=\DF1s\left[\DF{(2s-1)!}{s!}-(s-1)!\right].
\end{equation}
From \eqref{8} and \eqref{9}, we see that the family $\mathcal F$ with
$h$ satisfy assumption (a) and (b) of Theorem CFZ3, but $\mathcal F$ is not normal
at $z=0$. Indeed, the reason must be that $h(0)=0$.
\end{examp}

In Example 1, we have that $f^{(k+1)}(z)\ne0$ at the zero points
of $f^{(k)}(z)-h(z)$. If we strengthen  condition (b) of Theorem
CFZ2 to be $f^{(k)}(z)=h(z)\Longrightarrow f^{(k+1)}(z)=0$, then
we can obtain the following normal criterion.

\begin{theorem}\label{thm2} Let $\CF$ be a family of functions holomorphic on a domain
$D\subset\C$, all of whose zeros have multiplicity at least $k$,
where $k\ne2$ be a positive integer. Let $h(z)(\not\equiv0)$ be a
holomorphic function on $D$, that has no common zeros with any
$f\in\CF$. Assume also that the following two conditions hold for
every $f\in\CF$:
\begin{enumerate} \item[(a)] $f(z)=0\Longrightarrow
f^{(k)}(z)=h(z)$; and\item[(b)] $f^{(k)}(z)=h(z)\Longrightarrow
f^{(k+1)}(z)=0$.
\end{enumerate}
Then $\CF$ is normal on $D$.
\end{theorem}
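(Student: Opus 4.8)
The plan is to work locally and to isolate the genuinely new content at the (isolated) zeros of $h$. Since normality is local, fix $z_0\in D$ and show $\CF$ is normal at $z_0$. I would first record a structural fact used throughout: if $z^*$ is a zero of some $f\in\CF$ of multiplicity $m$, then $m\ge k$ by hypothesis while (a) gives $f^{(k)}(z^*)=h(z^*)$; if $m>k$ we would get $h(z^*)=f^{(k)}(z^*)=0$, contradicting that $f$ and $h$ have no common zeros. Hence every zero of every $f\in\CF$ has multiplicity \emph{exactly} $k$, with $f^{(k)}\ne0$ there. I then split on the value of $h(z_0)$. If $h(z_0)\ne0$, pick a disc $D_0\ni z_0$ on which $h\ne0$; there $\CF$ satisfies the hypotheses of Theorem CFZ2 (zeros of multiplicity $\ge k$, $k\ne2$, $h\ne0$), since our condition (b) is the case $c=0$ of the boundedness condition in Theorem CFZ2. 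Thus $\CF|_{D_0}$ is normal, in particular at $z_0$.

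The real work is the case $h(z_0)=0$, which by $h\not\equiv0$ occurs only at isolated points, and by the previous paragraph $\CF$ is already normal on a punctured disc $D^*=\{0<|z-z_0|<r\}$. Suppose $\CF$ is \emph{not} normal at $z_0$ and choose $\{f_n\}\subset\CF$ with no subsequence normal at $z_0$. Passing to a subsequence, $f_n$ converges locally uniformly on $D^*$ to a holomorphic $f$ or to $\infty$. If the limit is holomorphic, the maximum principle on $\{|z-z_0|\le r'\}$ gives $\sup_{|z-z_0|\le r'}|f_n|=\sup_{|z-z_0|=r'}|f_n|\to\sup_{|z-z_0|=r'}|f|<\infty$, so $\{f_n\}$ is locally bounded, hence normal, at $z_0$, a contradiction. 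Therefore $f_n\to\infty$ locally uniformly on $D^*$. Applying the same reasoning to $1/f_n$ shows that for large $n$ each $f_n$ must vanish inside $\{|z-z_0|<r'\}$ (otherwise $1/f_n\to0$ on the full disc and $f_n\to\infty$ there, again normal), and since $f_n\to\infty$ off $z_0$ these zeros must cluster at $z_0$.

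Write $p=\ord_{z_0}h\ge1$, $h(z)=(z-z_0)^p\widetilde h(z)$ with $b:=\widetilde h(z_0)\ne0$. I would rescale at the scale $\rho_n\to0$ of the cluster of zeros, $g_n(\zeta)=\rho_n^{-(k+p)}f_n(z_0+\rho_n\zeta)$, and obtain a nonconstant limit $g$. Because the natural exponent $k+p$ exceeds $k$, this is not the textbook Pang--Zalcman normalization and must be justified by hand from the clustering (a scale $\rho_n\ll|z_n-z_0|$ would make the blow‑up occur where $h\ne0$, excluded by the first case); a logarithmic‑derivative estimate rules out a transcendental limit, so $g$ is a polynomial, and the growth comparison $f_n(z^*)\sim\rho_n^{k+p}g\big((z^*-z_0)/\rho_n\big)\to\infty$ for fixed $z^*\in D^*$ forces $D:=\deg g>k+p$. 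In the limit (after shifting the image of $z_0$ to the origin) one reads off: the nonzero zeros of $g$ have multiplicity $k$ and satisfy $g^{(k)}=b\zeta^p$ there, while (b) passes to the relation that $g^{(k+1)}$ vanishes at every zero of $\Phi:=g^{(k)}-b\zeta^p$. Since $\Phi'=g^{(k+1)}-bp\zeta^{p-1}$, at any nonzero zero $\zeta_0$ of $\Phi$ we get $\Phi'(\zeta_0)=-bp\zeta_0^{p-1}\ne0$, so every nonzero zero of $\Phi$ is simple. Writing $\Phi=c\,\zeta^{e}\prod_{j=1}^{N'}(\zeta-\zeta_j)$ with distinct $\zeta_j\ne0$ and $N'=D-k-e$, the divisibility $\prod_j(\zeta-\zeta_j)\mid g^{(k+1)}$ (degree $D-k-1$) forces $N'\le D-k-1$, i.e. $e\ge1$: all the slack concentrates at the image of $z_0$. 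Substituting $g^{(k+1)}=\Phi'+bp\zeta^{p-1}$ then yields a residual polynomial identity localised at the origin, and feeding in that every zero‑multiplicity of $g$ is a multiple of $k$ (a Hurwitz consequence of the exact multiplicity $k$ of the zeros of the $f_n$), this identity admits no nonconstant solution unless $k=2$ — the borderline realised by the families of Examples 1 and 2. This contradiction establishes normality at $z_0$.

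I expect the entire difficulty to lie in the case $h(z_0)=0$, and there in two places. First, one must legitimise the \emph{super‑critical} rescaling (exponent $k+p>k$, outside the range of the standard Pang--Zalcman lemma) and establish the polynomial character of the limit $g$; this is where the clustering of zeros and a logarithmic‑derivative/Nevanlinna argument do the work. Second, and hardest, is the local analysis at the image of $z_0$ when $p\ge2$: the global simple‑zero count reduces everything to the order of vanishing at that one point, and the interplay between $p=\ord_{z_0}h$ and the divisibility by $k$ of the zero‑multiplicities is exactly where the hypothesis $k\ne2$ is forced. I would spend most of the effort making that last step precise.
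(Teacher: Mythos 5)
Your opening reductions match the paper: zeros of every $f\in\CF$ have multiplicity exactly $k$ (else $h$ and $f$ share a zero), Theorem CFZ2 with $c=0$ gives normality wherever $h\ne0$, and on a punctured disc around a zero of $h$ one reduces to $f_n\Rightarrow\infty$ with zeros of $f_n$ clustering at $z_0$. After that, however, your plan has gaps that are not just technical. First, the ``super-critical'' rescaling $g_n(\zeta)=\rho_n^{-(k+p)}f_n(z_0+\rho_n\zeta)$ is asserted, not constructed: you give no mechanism producing $\rho_n$ together with a nonconstant limit. The paper sidesteps this entirely by applying the standard Pang--Zalcman lemma (its Lemma 1, with $\alpha=k$) to the auxiliary family $\CF_1=\{f_n/h\}$, which is legitimate because at a zero of $f_n/h$ one has $(f_n/h)^{(k)}=1$ by Leibniz's rule and condition (a); the convergence of the rescaling of $f_n$ itself with exponent $k+\ell$ is then deduced as a corollary (its \eqref{22}), not postulated. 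Second, your one-phrase dismissal of a transcendental limit (``a logarithmic-derivative estimate'') replaces what in the paper is a genuine argument: Case (BII) applies Theorem CFZ2 to the dyadically rescaled family $\{G(2^nz)/2^{n(k+\ell)}\}$ on an annulus and contradicts Theorem B ($\varlimsup|z|r^\#(z)=\infty$ at an essential singularity).

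The fatal gap is your final step. You propose to finish by showing that the limiting polynomial relations (zeros of multiplicity $k$, $g^{(k)}=b\zeta^p$ at zeros of $g$, $g^{(k+1)}=0$ at zeros of $g^{(k)}-b\zeta^p$) ``admit no nonconstant solution unless $k=2$.'' That statement is false: for any $k$ and any $\zeta_0\ne0$, the polynomial $G(\zeta)=b\,\zeta_0^{\,p}(\zeta-\zeta_0)^k/k!$ satisfies all of these relations ($G^{(k)}\equiv b\zeta_0^p$ equals $b\zeta^p$ exactly at the zero $\zeta_0$ of $G$, and $G^{(k+1)}\equiv0$). Indeed, the paper's combinatorial zero-counting in Case (BI3) (the $T''$ argument) concludes precisely that $G$ must be of this one-zero form --- and at that point no contradiction is available from the limit function alone. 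The contradiction is extracted only by returning to the sequence $\{f_n\}$: either $f_n$ has a single zero near $0$, killed by the minimum principle applied to $H_n(z)=f_n(z)/(z-z_{n,0})^k$, or $f_n$ has a second zero, killed via the rescaled families $K_n,L_n$ and the paper's Lemma 7 (itself resting on a sequence version of Theorem CFZ2, where $k\ne2$ enters through the CFZ classification lemma). So the hypothesis $k\ne2$ is not ``forced by a polynomial identity at the origin''; it is consumed by the classification Lemma 2 and by Theorem CFZ2, and an argument that never leaves the limit function cannot close the proof.
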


Similarly, if we strengthen the condition (b) of Theorem CFZ3 to
$f''(z)=h(z)\Longrightarrow f'''(z)=f^{(s)}(z)=0$, then we can
also obtain the normality criterion.

\begin{theorem}\label{thm3} Let $\CF$ be a family of functions holomorphic on a domain
$D\subset\C$, all of whose zeros are multiple, where $s\ge2$ is an
even integer. Let $h(z)(\not\equiv0)$ be a holomorphic function on
$D$, that has no common zeros with any $f\in\CF$. Assume also that
the following two conditions hold for every $f\in\CF$:
\begin{enumerate} \item[(a)] $f(z)=0\Longrightarrow
f''(z)=h(z)$; and\item[(b)] $f''(z)=h(z)\Longrightarrow
f'''(z)=f^{(s)}(z)=0$.
\end{enumerate}
Then $\CF$ is normal on $D$.
\end{theorem}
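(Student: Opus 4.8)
The plan is to argue locally, exploiting the fact that normality is a local property: it suffices to prove that $\CF$ is normal at each point $z_0\in D$, and I would split according to the value $h(z_0)$. If $h(z_0)\neq0$, then $h$ is zero-free on some disc $U\ni z_0$, and on $U$ the family satisfies all the hypotheses of Theorem CFZ3 (our condition (b) is \emph{stronger} than the one there, since $f'''=f^{(s)}=0$ trivially gives $|f'''|+|f^{(s)}|\le c$). Hence for $s\ge4$ Theorem CFZ3 yields normality on $U$. The boundary case $s=2$ is degenerate: there $f^{(s)}=f''$, so (b) forces $h$ to vanish at any point where $f''=h$; combined with (a) and the no-common-zeros hypothesis this makes every $f\in\CF$ zero-free, and normality then follows from a Miranda/Montel-type theorem for $f\neq0$, $f''\neq h$. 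Thus the only genuine difficulty is the case $h(z_0)=0$, and from here I normalize $z_0=0$.

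Suppose $h(0)=0$. Since $h\not\equiv0$, the point $0$ is an isolated zero of $h$ of some order $p\ge1$, and on a punctured disc $U^\ast=\{0<|z|<r\}$ we have $h\neq0$; by the previous paragraph $\CF$ is normal on $U^\ast$. It remains to prove normality at $0$. If not, take a sequence $f_n\in\CF$ with no locally uniformly convergent subsequence near $0$; by normality on $U^\ast$ we may assume (after passing to a subsequence) that $f_n\to F$ locally uniformly on $U^\ast$, with $F$ holomorphic or $F\equiv\infty$. If $F$ is holomorphic, then the $f_n$ are uniformly bounded on $|z|=\varepsilon$, and the maximum principle (each $f_n$ is holomorphic on the full disc) bounds them uniformly on $|z|\le\varepsilon$, so Montel gives normality at $0$ — a contradiction. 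Hence $f_n\to\infty$ locally uniformly on $U^\ast$. Moreover $f_n$ must have zeros in $|z|<\varepsilon$ for large $n$: if some $f_n$ were zero-free on $|z|\le\varepsilon$, the minimum principle would force $f_n\to\infty$ on the whole disc and again give normality. Every such zero $w$ is \emph{exactly} double, because by (a) $f_n''(w)=h(w)\neq0$ (as $w\neq0$), so the multiplicity cannot exceed $2$.

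The heart of the matter, and the step I expect to be the main obstacle, is now a residue computation. Fix small $\varepsilon$ and set
\[
I_n=\frac{1}{2\pi i}\oint_{|z|=\varepsilon} z\,\frac{h(z)}{f_n(z)}\,dz .
\]
Because $h$ and $f_n$ share no zeros, the only poles of $z\,h/f_n$ inside $|z|=\varepsilon$ are the double zeros $w$ of $f_n$. Writing $f_n(z)=a_2(z-w)^2+a_3(z-w)^3+\cdots$, condition (a) gives $a_2=\tfrac12 f_n''(w)=\tfrac12 h(w)$, while the crucial input $a_3=\tfrac16 f_n'''(w)=0$ comes from (b) (at $w$ we have $f_n''=h$, hence $f_n'''=0$). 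A short Laurent expansion then produces the clean value
\[
\res_{z=w}\left(z\,\frac{h(z)}{f_n(z)}\right)=2+\frac{2\,w\,h'(w)}{h(w)} .
\]
Since $h(z)=z^p h_0(z)$ with $h_0(0)\neq0$, one has $\dfrac{z\,h'(z)}{h(z)}=p+O(z)$, so each residue equals $2+2p+O(\varepsilon)$ and in particular has real part at least $2+2p-C\varepsilon$. Summing over the $m_n\ge1$ zeros in the disc gives $\operatorname{Re} I_n\ge m_n(2+2p-C\varepsilon)\ge 2+2p-C\varepsilon$, bounded below by a positive constant once $\varepsilon$ is small. On the other hand, $f_n\to\infty$ uniformly on $|z|=\varepsilon$ gives $\min_{|z|=\varepsilon}|f_n|\to\infty$, whence $|I_n|\le \varepsilon^2\,\max_{|z|=\varepsilon}|h|/\min_{|z|=\varepsilon}|f_n|\to0$. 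This contradiction finishes the case $h(0)=0$ and hence the theorem.

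I would emphasize where the hypotheses are spent. The no-common-zeros assumption is used both to locate the poles of $h/f_n$ exactly at the zeros of $f_n$ and to force those zeros to be exactly double; condition (b) enters precisely through $f_n'''(w)=0$, which annihilates the $a_3$-term and yields the residue $2+2p$ (without it an uncontrolled term $\sim f_n'''(w)/h(w)$ survives, and indeed Example~2 — where $f_n'''\neq0$ at the zeros — shows that normality then fails). Finally, the weight $z$ in $I_n$ is chosen deliberately: it converts the logarithmic term $h'/h\sim p/z$ into the constant $p$, so the contributions of different zeros cannot cancel. The remaining routine points — the convergence dichotomy on $U^\ast$, the minimum-principle step, and the Laurent bookkeeping — are straightforward and I would only sketch them.
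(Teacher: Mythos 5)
Your proposal is correct, and at the crux of the matter --- normality at the zeros of $h$ --- it takes a genuinely different route from the paper. The paper, like you, first invokes Theorem CFZ3 at points where $h\ne 0$; but at a zero of $h$ it deploys the full rescaling machinery: it passes to the auxiliary family $\CF_2=\{f_n/h\}$, applies the Pang--Zalcman Lemma \ref{lemma1}, and splits into the cases $z_n/\rho_n\to\infty$ and $z_n/\rho_n\to\alpha\in\C$, using Lemmas \ref{lemma4} and \ref{lemma3} in the first case, and in the second a further dichotomy on the limit $G$ of $G_n(\zeta)=f_n(\rho_n\zeta)/\rho_n^{2+\ell}$: the polynomial case is handled as in Theorem 2 via Lemma \ref{lemma8}, the transcendental case via Theorem CFZ3 applied to the dyadic rescalings $G(2^nz)/2^{n(2+\ell)}$ together with Theorem B. Your contour-integral argument replaces all of this with a few lines: $f_n\Rightarrow\infty$ on the punctured disc, every zero $w$ of $f_n$ is nonzero (no common zeros) and exactly double with $a_2=h(w)/2$, and --- this is precisely where (b) is spent --- $a_3=f_n'''(w)/6=0$, so each residue of $zh/f_n$ equals $2+2wh'(w)/h(w)=2+2p+O(\varepsilon)$; the sum of residues then has real part at least $2+2p-C\varepsilon>0$, while the ML-estimate forces $I_n\to 0$. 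I have checked the residue computation (the vanishing of $a_3$ is exactly what kills the extra term $-wh(w)a_3/a_2^2$) and the order of quantifiers (fix $\varepsilon$ first, then $n\to\infty$); both are sound. Your proof is much closer in spirit to the paper's own elementary proof of Theorem 1, which at the analogous point uses Rouch\'e plus Lucas' theorem rather than residues, than to the paper's proof of Theorem 3. It is also worth noting what each approach buys: your computation works because the residue at a double pole is determined by $a_2$, $a_3$ and the first derivative of the numerator --- exactly the data controlled by (a) and $f'''=0$; for zeros of multiplicity $k\ge3$ (Theorem 2) the residue would involve the uncontrolled coefficients $a_{k+2},\dots,a_{2k-1}$, so the residue method is special to second derivatives, and the paper's heavier machinery is what permits a uniform treatment of Theorems 2 and 3. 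A pleasant by-product of your argument: at zeros of $h$ you never use $f^{(s)}(w)=0$, only $f'''(w)=0$.

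One minor gap to repair: your $s=2$ case rests on a Miranda-type criterion ($f\ne0$ and $f''\ne h$ with $h$ zero-free implies normality) that you neither prove nor cite. It is true --- for instance, by Lemma \ref{lemma1} with $\alpha=2$ and Hurwitz's theorem one reduces to the classical fact that no nonconstant entire function $g$ satisfies $g\ne0$ and $g''\ne c$ for a constant $c\ne0$ --- but as written it is an unsupported appeal. (To be fair, the paper is no more careful here: its opening appeal to Theorem CFZ3 is likewise valid only for $s\ge4$, so your treatment of $s=2$ is in fact the more conscientious one.)
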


Before we go to the proofs of the main results, let us set some
notation. Throughout, $D$ is a domain in $\C$. For $z_0\in\C$ and
$r>0$, $\Delta(z_0,r)=\{z:|z-z_0|<r\}$  and
$\Delta'(z_0,r)=\{z:0<|z-z_0|<r\}$. The unit disc will be denoted
by $\Delta$ and $\C^\ast=\C\setminus\{0\}$. We write
$f_n(z)\overset\chi \Rightarrow f(z)$ on $D$ to indicate that the
sequence $\{f_n\}$ converges to $f$  in the spherical metric,
uniformly on compact subsets of $D$, and $f_n\Rightarrow f$ on $D$
if the convergence is in the Euclidean metric. For a meromorphic
function $f(z)$ in $D$ and $a\in\widehat{\C}$,
$\overline{E}_f(a):=\{z\in D:f(z)=a\}$. The spherical derivative
of the meromorphic function $f$ at the point $z$ is denoted by
$f^\#(z).$

Frequently, given a sequence $\{f_n\}_1^\infty$ of functions, we
need to extract an appropriate subsequence; and this necessity may
recur within a single proof. To avoid the awkwardness of multiple
indices, we again denote the extracted subsequence by $\{f_n\}$
(rather than, say, $\{f_{n_k}\})$ and designate this operation by
writing ``taking a subsequence and renumbering," or simply
``renumbering". The same convention applies to sequences of
constants.

The plan of the paper is as follows. In Section 2, we state a
number of preliminary results. Then  in Section 3  we prove
Theorem 1. Finally, in Section 4  we prove Theorem~2.
\section{Preliminary results}

The following lemma is the local version of a well-known lemma of
X. C. Pang and L. Zalcman \cite[Lemma 2]{11}. For a proof see
\cite[Lemma 2]{4}, also cf.\cite[Lemma 2]{9},
\cite[pp.~216--217]{15}, \cite[pp.~299--300]{7}, \cite[p.~4]{8}.

\begin{lemma}\label{lemma1} Let $\CF$ be a family of functions meromorphic in
a domain $D$, all of whose zeros have multiplicity at least $k$, and
suppose that there exists $A\ge1$, such that $|f^{(k)}(z)|\le A$
whenever $f(z)=0$. Then if $\CF$ is not normal at $z_0\in D$, there
exist, for each $0\le\alpha\le k$,
\begin{enumerate} \item[(a)] points $z_n\to z_0$;
\item[(b)] functions $f_n\in\CF$;and\item[(c)] positive numbers
$\rho_n\to0^+$
\end{enumerate}
such that $g_n(\zeta):=\rho^{-\alpha}_nf_n(z_n+f_n\zeta)
\overset\chi\Rightarrow g(\zeta)$ on $\C$, where $g$ is a
nonconstant meromorphic function on $\C$, such that for every
$\zeta\in\C$, $g^\#(\zeta)\le g^\#(0)=kA+1$.
\end{lemma}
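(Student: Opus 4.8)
The plan is to reproduce, in localized form, the renormalization argument of Zalcman as refined by Pang; alternatively one could deduce the statement from the global Pang--Zalcman lemma by a shrinking-disc localization, but the direct construction is what yields both the precise inequality $g^\#(\zeta)\le g^\#(0)$ and the conclusion $z_n\to z_0$. First I would reduce to the normalized situation $z_0=0$ with $\overline{\Delta}\subset D$, which is harmless because the hypotheses (zeros of multiplicity at least $k$, and $|f^{(k)}(z)|\le A$ whenever $f(z)=0$) are preserved under translation and dilation of the variable. Since $\CF$ is not normal at $0$, Marty's criterion produces functions whose spherical derivatives are unbounded on every disc $\Delta(0,\rho)$; running this on a sequence of radii $r_n\downarrow 0$ and selecting, for each $n$, a function $f_n\in\CF$ with $\sup_{|z|\le r_n}f_n^\#(z)\to\infty$ sets up a diagonal scheme that will force the eventual centers to satisfy $z_n\to 0$.

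Next comes the heart of the matter, the maximal rescaling. For each $n$ I would introduce a weighted spherical quantity $W_n(z)=(r_n^2-|z|^2)^{\beta}\,f_n^\#(z)$, with the weight exponent $\beta$ calibrated to the prescribed scaling exponent $\alpha$, and let $z_n$ be a point where $W_n$ attains its (interior) maximum; such a point exists because $W_n$ is continuous and vanishes on $\partial\Delta(0,r_n)$, and $W_n(z_n)\to\infty$ by the previous step. I would then fix the scale $\rho_n\to 0^+$ so that the rescaled functions $g_n(\zeta)=\rho_n^{-\alpha}f_n(z_n+\rho_n\zeta)$ are normalized by $g_n^\#(0)=kA+1$; concretely $\rho_n$ is comparable to $f_n^\#(z_n)^{-1}$, adjusted by the $\alpha$-weight. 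The maximality of $W_n$ at $z_n$ is designed to propagate to the rescaled scale: for each fixed $\zeta$ and all large $n$ it gives a bound of the shape $g_n^\#(\zeta)\le (kA+1)\,\tfrac{r_n^2-|z_n|^2}{r_n^2-|z_n+\rho_n\zeta|^2}$, whose right-hand side tends to $kA+1$ since $\rho_n\to 0$ and $z_n\to 0$. Passing to the limit then yields the sharp inequality $g^\#(\zeta)\le g^\#(0)=kA+1$.

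It remains to extract and identify the limit. The uniform bound $g_n^\#\le kA+1$ on compact subsets of $\C$ makes $\{g_n\}$ a normal family on discs exhausting the plane (Marty's criterion in the reverse direction), so a diagonal subsequence converges, $g_n\overset{\chi}{\Rightarrow} g$, to a function $g$ that is meromorphic on $\C$, a priori allowing $g\equiv\infty$. Nonconstancy, and in particular $g\not\equiv\infty$, follows because spherical convergence forces $g^\#(0)=\lim g_n^\#(0)=kA+1\neq 0$. The global bound $g^\#\le kA+1$ together with the Ahlfors--Shimizu estimate of the characteristic function gives order at most $2$. The exponent restriction $0\le\alpha\le k$ enters precisely to keep the zero structure intact: near a zero $w^\ast$ of $f_n$ one has $f_n(z)=\tfrac{f_n^{(k)}(w^\ast)}{k!}(z-w^\ast)^k+\cdots$ with $|f_n^{(k)}(w^\ast)|\le A$, and the factor $\rho_n^{-\alpha}$ with $\alpha\le k$ prevents $g_n$ from blowing up there, so the zeros of the limit $g$ inherit multiplicity at least $k$; this interplay is also what produces the normalizing constant $kA+1$.

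The step I expect to be the main obstacle is the calibration in the second paragraph: choosing the weight exponent and the scale $\rho_n$ so that simultaneously $g_n^\#(0)$ is pinned to the fixed value $kA+1$, the maximality inequality transfers cleanly to the rescaled picture to give $g^\#\le g^\#(0)$ globally, and the normalization $\rho_n^{-\alpha}$ stays compatible with the multiplicity hypothesis throughout the range $0\le\alpha\le k$. Equivalently, the delicate point is ruling out degeneration of the limit---excluding $g\equiv\infty$ and preventing the high-order zeros from dissolving in the limit---and it is exactly here that both hypotheses, ``zeros of multiplicity at least $k$'' and ``$|f^{(k)}|\le A$ at the zeros'', are consumed. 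By comparison, the localization that forces $z_n\to z_0$ is a routine diagonal argument once the single-radius construction is in place.
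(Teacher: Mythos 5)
A preliminary remark: the paper does not actually prove Lemma~1 --- it is quoted as known, with the proof delegated to the cited literature (Liu--Nevo [4, Lemma 2], Pang [9], Zalcman [15], Nevo [7], Nevo--Pang--Zalcman [8]; the result is due to Pang and Zalcman). So the benchmark is the Pang--Zalcman proof, and measured against it your sketch has two genuine gaps, both sitting exactly where you yourself placed the ``main obstacle.'' The first: your maximized quantity $W_n(z)=(r_n^2-|z|^2)^{\beta}f_n^{\#}(z)$ cannot work for $\alpha\neq 0$, for any calibration of $\beta$. The spherical derivative of the rescaled function $g_n(\zeta)=\rho_n^{-\alpha}f_n(z_n+\rho_n\zeta)$ is
\[
g_n^{\#}(\zeta)=\frac{\rho_n^{1+\alpha}\,|f_n'(z_n+\rho_n\zeta)|}{\rho_n^{2\alpha}+|f_n(z_n+\rho_n\zeta)|^2},
\]
and its ratio to $\rho_n f_n^{\#}(z_n+\rho_n\zeta)$ equals $\rho_n^{\alpha}\bigl(1+|f_n|^2\bigr)/\bigl(\rho_n^{2\alpha}+|f_n|^2\bigr)$, which depends on $|f_n|$ itself and ranges between $\rho_n^{\alpha}$ and $\rho_n^{-\alpha}$. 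Consequently, maximality of $W_n$ at $z_n$ gives no control whatsoever on $g_n^{\#}(\zeta)$; your displayed transfer inequality is precisely Zalcman's $\alpha=0$ computation and does not survive the factor $\rho_n^{-\alpha}$. This is why Pang's argument maximizes the modified quantity $\dfrac{t^{1+\alpha}|f_n'(z)|}{t^{2\alpha}+|f_n(z)|^2}$ (with $t=r_n-|z|$, say), whose shape is forced by the identity above --- and even then the transferred bound carries the extra factor $\dfrac{t(w)^{2\alpha}+|f_n(w)|^2}{\rho_n^{2\alpha}+|f_n(w)|^2}$, which must be tamed by a further argument using the multiplicity-at-least-$k$ hypothesis. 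None of this is exponent bookkeeping; it is the actual content of the lemma, and it is absent from the proposal.

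The second gap is the endpoint $\alpha=k$, which no scheme of your type can reach, because in your outline the hypothesis $|f^{(k)}(z)|\le A$ at the zeros enters only a posteriori (to keep the zeros of the limit intact), never in the construction of $z_n,\rho_n$ or in the transfer of maximality. Test your scheme on $f_n(z)=nz^{k}$ on $\Delta$: all zeros have multiplicity $k$, the family is not normal at $0$, the bound at zeros fails ($|f_n^{(k)}(0)|=n\,k!\to\infty$), and your construction would run verbatim since it never consults that bound; yet $\rho_n^{-k}f_n(z_n+\rho_n\zeta)=n\bigl(\zeta+z_n/\rho_n\bigr)^{k}$, and no choice of $z_n\to 0$, $\rho_n\to 0^{+}$ yields a nonconstant meromorphic spherical limit (if $z_n/\rho_n\to\infty$ the functions tend to $\infty$ locally uniformly; if $z_n/\rho_n\to c\in\C$ they tend to $\infty$ off $\zeta=-c$ while vanishing at points tending to $-c$, so no spherically continuous limit exists). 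Hence the conclusion for $\alpha=k$ is genuinely false without the bound, so a proof whose rescaling step does not consume the bound must break --- at exactly the calibration/transfer step. In the Pang--Zalcman proof the case $\alpha=k$ is treated by a separate argument in which the bound $|f^{(k)}|\le A$ at zeros is woven into the rescaling itself, and it is this argument that produces the specific normalization $g^{\#}(0)=kA+1$. In short, your proposal is an accurate rendition of Zalcman's $\alpha=0$ lemma plus a correct list of desiderata, but the two ingredients that make the statement true for $0<\alpha\le k$ --- Pang's functional and the endpoint mechanism --- are missing.
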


\begin{lemma}\label{lemma2}\cite[Lemma 5]{1} Let $f$ be a nonconstant entire
function of order $\rho$, $ 0\le\rho\le1$, all of whose zeros have
multiplicity at least $k$, where $k\ne2$ is a positive integer.
And let $a\ne0$ be a constant. If
$\overline{E}_f(0)\subset\overline{E}_
{f^{(k)}}(a)\subset\overline{E}_{f^{(k+1)}}(0)$, then
$$f(z)=\DF{a(z-b)^k}{k!},$$ where $b$ is a constant.
\end{lemma}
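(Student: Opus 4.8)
The plan is to pin down the rigid local behaviour of $f$ at its zeros, reduce the whole problem to a single auxiliary entire function $\psi=f^{1/k}$, and then treat the polynomial and the transcendental case separately, the latter being where the hypothesis $k\neq2$ is essential.

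First I would extract the local data. Since every zero of $f$ has multiplicity $\ge k$ while (a) forces $f^{(k)}=a\neq0$ there, no zero can have multiplicity $>k$; hence every zero of $f$ has multiplicity exactly $k$. Moreover $f$ must have a zero: otherwise, as $f$ has order $\le1$, Hadamard's theorem gives $f=e^{\gamma z+\delta}$ with $\gamma\neq0$, and wherever $f^{(k)}=\gamma^k f=a$ one has $f^{(k+1)}=\gamma a\neq0$, contradicting (b). Because all zero multiplicities equal $k$, the $k$-th root is single valued, so I may write $f=\psi^k$ with $\psi$ entire of order $\le1$ having only simple zeros. Expanding $\psi=b_1(z-z_0)+b_2(z-z_0)^2+\cdots$ near a zero $z_0$ of $\psi$, a short computation gives $f^{(k)}(z_0)=k!\,b_1^k$ and $f^{(k+1)}(z_0)=(k+1)!\,k\,b_1^{k-1}b_2$, so (a) and (b) translate into $(\psi'(z_0))^k=a/k!$ and $\psi''(z_0)=0$ at every zero of $\psi$. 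Since the zeros of $\psi$ are simple and $\psi''$ vanishes at each of them, $\psi''/\psi$ is entire.

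In the polynomial case the argument closes at once: if $\psi$ is a polynomial then $\psi''/\psi$ is a polynomial of degree $(\deg\psi-2)-\deg\psi<0$, hence $\psi''\equiv0$, so $\psi$ is linear, $\psi=\beta(z-b)$, and $(\psi')^k=\beta^k=a/k!$ yields $f=\psi^k=\frac{a(z-b)^k}{k!}$. This disposes of every $k$. Now suppose $f$ (equivalently $\psi$) is transcendental. Then $\psi$ is a finite-order ($\le1$) solution of $\psi''=P\psi$ with $P:=\psi''/\psi$ entire; by the classical theory of this equation a finite-order solution forces $P$ to be a polynomial, and order $\le1$ forces $\deg P\le0$, so $P\equiv c$ is constant, with $c\neq0$ since $\psi$ is transcendental. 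Thus $\psi''=c\psi$, giving the first integral $\psi'^2-c\psi^2\equiv W$; at a zero of $\psi$ this reads $\psi'^2=W$, and $W\neq0$ because the zeros of $\psi$ are simple. Writing $\psi=Ae^{\mu z}+Be^{-\mu z}$ (with $\mu=\sqrt c$ and $A,B\neq0$, since $\psi$ has zeros), one checks that $\psi'$ takes the two values $\pm\sqrt W$ alternately along the infinitely many zeros of $\psi$; the constancy $(\psi')^k=a/k!$ then forces $(\sqrt W)^k=(-\sqrt W)^k$, i.e. $k$ must be even (for odd $k$ this already contradicts the hypotheses). For even $k$, set $\eta=\psi^2$, so that $\eta'^2=4c\eta^2+4W\eta$ and $\eta''=4c\eta+2W$; then $f=\eta^{k/2}$, every even-order derivative of $f$ is a polynomial in $\eta$, and $f^{(k)}=Q(\eta)$ with $Q=\mathcal{L}^{k/2}[\eta^{k/2}]$, where $\mathcal{L}=(4c\eta^2+4W\eta)\frac{d^2}{d\eta^2}+(4c\eta+2W)\frac{d}{d\eta}$, so $\deg Q=k/2$. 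Since $\eta'=0$ only where $\eta\in\{0,-W/c\}$ and $\eta$ omits no value, condition (b) (that $f^{(k)}=a\Rightarrow f^{(k+1)}=Q'(\eta)\eta'=0$) forces every root of $Q(\eta)-a$ other than $0$ and $-W/c$ to be a multiple root of $Q-a$.

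The heart of the matter, and the place where $k\neq2$ enters, is the remaining claim: for every even $k\ge4$ the polynomial $Q(\eta)-a=\mathcal{L}^{k/2}[\eta^{k/2}]-a$ does possess a simple root away from $\{0,-W/c\}$, contradicting (b). I expect this to be the main obstacle, since it is a statement about the specific polynomial $Q$ and not a soft degree count (indeed the naive counting of multiplicities is consistent with spurious solutions and must be refined). I would prove it by analysing the three-term recursion $\mathcal{L}[\eta^{j}]=4cj^2\eta^{j}+2Wj(2j-1)\eta^{j-1}$ that determines the coefficients of $Q$; the case $k=4$ is already indicative, where $Q(\eta)-a=16c\,\eta(16c\,\eta+15W)$ has the simple root $\eta=-15W/(16c)\notin\{0,-W/c\}$. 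By contrast, for $k=2$ one finds $Q(\eta)-a=4c\,\eta$, whose unique root $\eta=0$ is exactly one of the two excluded branch values; this is precisely why the conclusion genuinely fails for $k=2$, the extremal examples being $f(z)=\frac{a}{2\omega^2}\sin^2(\omega z)$. An alternative route to the same contradiction, avoiding the explicit recursion, is a Nevanlinna count: compare $\overline N\!\left(r,\frac{1}{f^{(k)}-a}\right)$ with $T(r,f^{(k)})=\frac{k}{2}T(r,\eta)+O(1)$, using that every $a$-point of $f^{(k)}$ off the two branch values is multiple and that $\eta$ has no Picard value, and then show that the resulting deficiency relation can hold only when $k=2$.
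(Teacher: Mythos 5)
First, be aware that the paper offers no proof of this statement to compare against: Lemma~2 is quoted verbatim from Chang--Fang--Zalcman \cite[Lemma 5]{1}, so your argument must stand entirely on its own. Much of it does: the observation that every zero of $f$ has multiplicity exactly $k$, the extraction of an entire $\psi$ of order $\le1$ with $f=\psi^k$ and simple zeros, the translation of the hypotheses into $(\psi'(z_0))^k=a/k!$ and $\psi''(z_0)=0$ at each zero $z_0$ of $\psi$, the polynomial case via the entire function $\psi''/\psi$, the reduction of the transcendental case to $\psi''=c\psi$ with $c\neq0$ (hence $\psi=Ae^{\mu z}+Be^{-\mu z}$, $AB\neq0$) by the classical order theory of $w''=Pw$, and the elimination of all odd $k$ by the alternation of $\psi'$ between $\pm\sqrt W$ at the zeros of $\psi$ are all correct; the case $k=4$ is also correctly closed.

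However, the case of even $k\ge6$ --- which you yourself flag as ``the main obstacle'' --- is not proved; it is only asserted, with two candidate strategies. This is a genuine gap, and it is the real content of the lemma: everything preceding it is soft, whereas the claim that $Q(\eta)-a=\mathcal{L}^{k/2}[\eta^{k/2}]-a$ always possesses a simple root outside $\{0,-W/c\}$ is a nontrivial algebraic fact about a specific family of polynomials. Worse, your proposed fallback via Nevanlinna counting cannot succeed as described. Writing $T=T(r,\psi)$, each root of $Q-a$ outside $\{0,-W/c\}$ contributes $2T+o(T)$ to $\overline N\bigl(r,1/(f^{(k)}-a)\bigr)$, while the branch values $0$ and $-W/c$ contribute only $T+o(T)$; condition (b) gives $\overline N\le\tfrac12N=\tfrac k2T+o(T)$, and this inequality is exactly equivalent to the multiplicity count that you yourself admit is ``consistent with spurious solutions.'' Concretely, for $k=6$ the spurious configuration $Q-a=C\eta(\eta-\eta_1)^2$ with $\eta_1\notin\{0,-W/c\}$ satisfies every such counting relation with equality, and it is excluded only by computing $Q=\mathcal{L}^3[\eta^3]$ explicitly and checking that its quadratic factor has nonzero discriminant and does not vanish at $-W/c$. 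So the entire burden falls on your first strategy --- a genuine analysis of the recursion $\mathcal{L}[\eta^j]=4cj^2\eta^j+2Wj(2j-1)\eta^{j-1}$ valid for all even $k\ge6$ --- and that analysis is precisely what is missing; until it is supplied, the proof is incomplete.
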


\begin{lemma}\label{lemma3}\cite[Lemma 6]{1} Let $f$ be a nonconstant entire
function of order $\rho$, $ 0\le\rho\le1$, all of whose zeros are
multiple. Let $s\ge 4$ be an even integer and $a\ne0$ be a
constant. If
$\overline{E}_f(0)\subset\overline{E}_{f''}(a)\subset\overline{E}_{f'''}(0)\cap
\overline{E}_{f^{(s)}}(0)$, then$$f(z)=\DF{a(z-b)^2}2,$$where $b$
is a constant.
\end{lemma}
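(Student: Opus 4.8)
The plan is to run a local multiplicity analysis, settle the polynomial and degenerate cases by elementary counting, and then reduce the transcendental case to a single Nevanlinna-theoretic obstruction. First I would extract the pointwise information. If $z_0$ is a zero of $f$ of multiplicity $p\ge 2$ (all zeros are multiple), then $f''(z_0)=a\ne 0$ by the first inclusion forces $p=2$, so \emph{every zero of $f$ is exactly double}. Next, at every point where $f''=a$ (in particular at every zero of $f$) the hypotheses give $f'''=0$ and $f^{(s)}=0$; writing $w:=f''-a$, this says that \emph{every zero of $w$ is multiple} (it is a common zero of $w$ and $w'$) and is also a zero of $w^{(s-2)}$. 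The goal becomes to show $w\equiv 0$, since then $f$ is a quadratic polynomial, and a nonconstant quadratic over $\C$ all of whose zeros are multiple must equal $\DF{a(z-b)^2}2$.

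I would first settle the polynomial case, which forces the conclusion by a degree count. Suppose $f$ is a polynomial with $t\ge 1$ distinct (double) zeros, so $\deg f=2t$ and $\deg w=2t-2$. Each of the $t$ zeros of $f$ is a zero of $w$ of multiplicity $\ge 2$, contributing at least $2t$ to the zero count of $w$; hence $2t\le\deg w=2t-2$, which is impossible unless $w\equiv 0$. Thus a polynomial $f$ must have $w\equiv 0$, and then $\deg f=2$ with a double zero gives the desired form (the alternative $t=0$ being the excluded nonzero constant). The same reasoning disposes of the possibility that $w$ is zero-free: then the first inclusion forces $f$ to have no zeros, so (order $\le 1$) $f=e^{\gamma z+\delta}$ with $\gamma\ne 0$; but then at the existing $a$-points of $f''$ one has $f'''=\gamma^3 f\ne 0$, contradicting the second inclusion.

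For transcendental $f$ with $w\not\equiv 0$ I would introduce $\varphi:=\DF{f''-a}f$. Since $f$ has double zeros while $w$ vanishes there to order $\ge 2$, the quotient has no poles, so $\varphi$ is \emph{entire}; a logarithmic-derivative estimate gives $T(r,\varphi)\le T(r,f)+S(r,f)$, so $\varphi$ has order $\le 1$ and satisfies $f''-\varphi f=a$. If $\varphi\equiv\lambda$ is constant, then $\lambda=0$ is exactly $w\equiv 0$, while for $\lambda\ne 0$ the relation $f''=\lambda f+a$ yields, at any zero $z_0$ of $f$, the value $f^{(s)}(z_0)=\lambda^{(s-2)/2}a\ne 0$ (here the \emph{evenness of $s$} is essential, since for odd $s$ this value would instead involve $f'(z_0)=0$), contradicting $f^{(s)}(z_0)=0$.

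The remaining case — $f$ transcendental and $\varphi$ nonconstant — is where I expect the real difficulty. Differentiating $f''=\varphi f+a$ shows that each $a$-point of $f''$ lying off the zero set of $f$ is a \emph{double} zero of $\varphi$, and one again has $f^{(s)}=0$ there; the plan is to feed the two ramification phenomena — $f$ taking $0$ only with multiplicity $2$, and $f''$ taking $a$ only at multiple points carrying the extra relation $f^{(s)}=0$ — into the second main theorem for $f$ (or for $f''$), together with the control $T(r,\varphi)\le T(r,f)+S(r,f)$ and the hypothesis $\rho\le 1$, to force $T(r,f)=S(r,f)$ and hence reduce to the already-settled polynomial case. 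The hard part will be extracting enough ramification to beat the constants in the second main theorem: the plain double-zero bounds are too weak by themselves, so the argument must genuinely exploit the auxiliary vanishing $f^{(s)}=0$ at the $a$-points of $f''$ and the evenness of $s$, precisely as these entered the constant-$\varphi$ computation above.
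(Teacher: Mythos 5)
Your preliminary reductions are correct: zeros of $f$ are exactly double; the polynomial case follows from the degree count on $w=f''-a$; a zero-free $f$ of order at most $1$ is eliminated; and the constant case $\varphi\equiv\lambda\neq0$ is handled correctly, with the right observation that the evenness of $s$ enters exactly in $f^{(s)}(z_0)=\lambda^{s/2-1}a\neq0$. Indeed, that case is the essential one for ruling out examples such as $f(z)=\cosh^2 z$, which has order $1$, all zeros double, satisfies $f''=4f-2$ and hence $\overline{E}_f(0)\subset\overline{E}_{f''}(-2)\subset\overline{E}_{f'''}(0)$, and violates only the $f^{(s)}$ condition for even $s$. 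But the lemma is not proved: the case in which $f$ is transcendental and $\varphi=(f''-a)/f$ is nonconstant --- the actual substance of the result --- is left as a plan, and you concede that you do not know how to make the second main theorem close. This is a genuine gap, not a routine verification. The ramification data you propose to use (double zeros of $f$; multiple $a$-points of $f''$; the fact that $a$-points of $f''$ off the zero set of $f$ are multiple zeros of $\varphi$) produce only inequalities of the shape
\begin{equation*}
T(r,f)\le\tfrac12\,T(r,f)+\overline{N}(r,\cdot)+S(r,f),
\end{equation*}
which are perfectly consistent and yield no contradiction; and you give no mechanism whatsoever for converting the pointwise vanishing $f^{(s)}=0$ at the $a$-points of $f''$ into a Nevanlinna-theoretic term. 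Since you yourself identify that this extra vanishing must carry the argument, a proof that does not say how to use it is missing its central step.

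For the comparison you were asked about: this statement is not proved in the paper at all --- it is imported verbatim as \cite[Lemma 6]{1} (Chang--Fang--Zalcman), where its proof is a substantial argument in its own right --- so there is no internal proof to measure yours against; judged on its own, your write-up establishes the lemma only in the polynomial, zero-free, and constant-$\varphi$ cases. If you want to attack the remaining case, a more promising structural step (closer to what is actually needed) is to use that all zeros of $f$ are exactly double and $f$ has order at most $1$ to write $f=F^2$ with $F$ entire of order at most $1$ and with only simple zeros (the square root is single-valued because every zero of $f$ has even multiplicity); the hypotheses then become pointwise constraints on $F$ and $F'$, for instance $2F'(z_0)^2=a$ at every zero $z_0$ of $F$, and the analysis can be carried out on $F$ rather than on deficiency counts for $f$.
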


\begin{lemma}\label{lemma4}(see \cite[pp.~118--119,122--123]{2}) Let $f$ be a meromorphic
function on $\C$. If $f^\#$ is uniformly bounded on $\C$, then the
order of $f$ is at most $2$. If $f$ is an entire function, then the
order of $f$ is at most $1$.
\end{lemma}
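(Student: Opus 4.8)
The plan is to measure the order through the Ahlfors--Shimizu characteristic, which differs from the Nevanlinna characteristic $T(r,f)$ only by a bounded term and therefore defines the same order $\rho=\limsup_{r\to\infty}\frac{\log T(r,f)}{\log r}$. Writing the spherical area (counted with multiplicity) of the image of $\Delta(0,t)$ as
$$A(t)=\frac1\pi\iintl_{|z|<t}\bigl(f^\#(z)\bigr)^2\,dx\,dy,$$
one has $T_0(r,f)=\int_0^r A(t)\,\DF{dt}t$, and this is the quantity I would estimate directly from the hypothesis $f^\#\le M$.

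For the meromorphic statement the argument is immediate: since $f^\#(z)\le M$ everywhere, $A(t)\le\frac1\pi\cdot\pi t^2\cdot M^2=M^2t^2$, whence
$$T_0(r,f)\le\int_0^r M^2t\,dt=\DF{M^2}2\,r^2 .$$
Thus $T(r,f)=O(r^2)$ and $\rho\le2$. I expect this half to be routine.

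The entire case, where one must sharpen $2$ to $1$, is the real obstacle, and the extra leverage has to come from the fact that an entire $f$ omits $\infty$ (equivalently $N(r,f)\equiv0$), something the symmetric area bound above cannot see. Here I would estimate $\log^+M(r,f)$ rather than the area, exploiting two elementary consequences of $f^\#\le M$ on the harmonic function $u=\log|f|$ and the domain $\Omega=\{|f|>1\}$: first, on $\partial\Omega=\{|f|=1\}$ the normal derivative satisfies $|\nabla u|=|f'|=2f^\#\le 2M$; second, since $\DF{|f'|}{|f|^2}=f^\#\bigl(1+|f|^{-2}\bigr)\le 2M$ where $|f|\ge1$, the function $1/|f|$ is $2M$-Lipschitz there, so the tube $\{1<|f|<2\}$ has width at least $(4M)^{-1}$ everywhere. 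Representing $\log^+|f|$ through its Riesz mass (Jensen's formula for subharmonic functions), the growth of the circular means of $\log^+|f|$ is governed by the mass carried on $\{|f|=1\}$, which is bounded by $2M$ times the length of this level curve in $\Delta(0,t)$; the uniform width of the tube keeps the channels of $\Omega$ that run to $\infty$ from being too narrow.

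The crux---and the step I expect to be hardest---is converting this normal-derivative and width control into the linear bound $\log M(r,f)=O(r)$. Morally, bounded spherical derivative forces the super-level sets of $f$ to possess only finitely many tracts whose angular opening is bounded below, and a Denjoy--Carleman--Ahlfors / Ahlfors-distortion estimate then caps the growth of $u$ in each tract at exponential type, yielding $\rho\le1$. Since \lemref{lemma4} is being invoked as a cited result, in practice I would simply reproduce this Clunie--Hayman computation; the route sketched above is the one along which I would reconstruct it.
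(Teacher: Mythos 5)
First, a point of reference: the paper contains no proof of Lemma~4 at all --- it is quoted from Clunie and Hayman \cite{2} with page references (pp.~118--119 for the meromorphic half, pp.~122--123 for the entire half), so the baseline for comparison is the classical argument. Your meromorphic half is complete, correct, and is exactly that classical argument: $f^\#\le M$ gives $A(t)\le M^2t^2$, hence $T_0(r,f)\le M^2r^2/2$, and since the Ahlfors--Shimizu and Nevanlinna characteristics differ by $O(1)$, the order is at most $2$. No complaints there.

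The entire half, however, has a genuine gap, and it sits precisely at the step you yourself flag as the crux. Your preparatory observations are all correct: on $\{|f|=1\}$ one has $|f'|=2f^\#\le 2M$; on $\{|f|\ge1\}$ the function $1/|f|$ is $2M$-Lipschitz, so the level sets $\{|f|=1\}$ and $\{|f|=2\}$ are at distance at least $1/(4M)$. But the machinery you propose for converting this into $\log M(r,f)=O(r)$ points the wrong way. The Denjoy--Carleman--Ahlfors theorem and the Ahlfors distortion inequality are \emph{lower-bound} mechanisms: many tracts, or a tract of small angular opening $\theta(t)$, force growth at least of size $\exp\bigl(\pi\int^r dt/(t\theta(t))\bigr)$; neither caps growth from above. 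Moreover, your width bound does not yield tracts whose ``angular opening is bounded below'': a channel of absolute width just over $1/(4M)$ has angular width of order $1/(Mr)$ at radius $r$, perfectly consistent with your Lipschitz estimate. (One can combine distortion with the order-at-most-$2$ bound from the first half to rule out such narrow channels, but the Phragm\'en--Lindel\"of conclusion this circle of ideas delivers is again order at most $2$, not $1$.) The genuinely hard content --- that $u=\log|f|$, harmonic and vanishing on the boundary of $\{|f|>1\}$ with boundary normal derivative at most $2M$, can grow at most linearly --- is exactly what Clunie--Hayman establish on pp.~122--123 by a different computation, and you do not carry it out; you explicitly defer to ``reproducing'' it. So as a proof, your proposal establishes only the first assertion of the lemma; for the second it is, as it stands, a citation accompanied by a heuristic whose sketched route would not close. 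Given that the paper itself treats the lemma purely as a cited result, the citation is legitimate --- but the surrounding sketch should not be presented as the skeleton of a proof.
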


The following lemma is a slight generalization of Theorem CFZ2 for
sequences.

\begin{lemma}\label{lemma5}(cf. \cite[Lemma 5]{4}) Let $\{f_n\}$ be a sequence of functions holomorphic on a domain
$D\subset\C$, all of whose zeros have multiplicity at least $k$,
and let $\{h_n\}$ be a sequence of functions analytic on $D$ such
that $h_n {(z)}\Rightarrow h {(z)}$ on $D$, where $h(z)\ne0$ for
$z\in D$ and $k\ne2$ be a positive integer. Suppose that, for each
$n$, $f_n(z)=0\Longrightarrow f^{(k)}_n(z)=h_n(z)$ and
$f^{(k)}_n(z)=h_n(z)\Longrightarrow f^{(k+1)}_n(z)=0$. Then
$\{f_n\}$ is normal on $D$.
\end{lemma}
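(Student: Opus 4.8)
The plan is to argue locally and by contradiction, using the Pang--Zalcman rescaling of \lemref{lemma1} to produce a limit function that \lemref{lemma2} pins down completely, and then to contradict the normalization built into the rescaling. Since normality is local, suppose $\{f_n\}$ fails to be normal at some $z_0\in D$, and set $a:=h(z_0)$, so that $a\neq0$ because $h$ is zero-free on $D$. Fix a disc $\Delta(z_0,r)\subset\subset D$ on which $h_n\Rightarrow h$; then for all large $n$ one has $|h_n|\le\sup_{\overline{\Delta(z_0,r)}}|h|+1$ there, and since $f_n(z)=0$ forces $|f_n^{(k)}(z)|=|h_n(z)|$ by (a), the sequence satisfies the hypothesis of \lemref{lemma1} with any sufficiently large constant $A$. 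I will exploit this freedom only at the very end; for now I merely record that some admissible $A$ exists.

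Apply \lemref{lemma1} with $\alpha=k$: after renumbering there are $z_n\to z_0$, $\rho_n\to0^+$ and $f_n$ with $g_n(\zeta):=\rho_n^{-k}f_n(z_n+\rho_n\zeta)\overset{\chi}{\Rightarrow}g(\zeta)$ on $\C$, where $g$ is nonconstant and $g^\#(\zeta)\le g^\#(0)=kA+1$. As the $g_n$ are holomorphic, their spherical limit is holomorphic or $\equiv\infty$; being nonconstant, $g$ is entire, and \lemref{lemma4} bounds its order by $1$. By Hurwitz's theorem, together with the fact that every zero of $f_n$ (hence of $g_n$) has multiplicity $\ge k$, every zero of $g$ has multiplicity $\ge k$. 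Note that $g_n^{(k)}(\zeta)=f_n^{(k)}(z_n+\rho_n\zeta)$ and $g_n^{(k+1)}(\zeta)=\rho_n f_n^{(k+1)}(z_n+\rho_n\zeta)$.

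Next I verify the chain $\overline{E}_g(0)\subset\overline{E}_{g^{(k)}}(a)\subset\overline{E}_{g^{(k+1)}}(0)$. If $g(\zeta_0)=0$, Hurwitz yields zeros $\zeta_n\to\zeta_0$ of $g_n$, so $f_n(z_n+\rho_n\zeta_n)=0$; then (a) gives $g_n^{(k)}(\zeta_n)=h_n(z_n+\rho_n\zeta_n)\to h(z_0)=a$, whence $g^{(k)}(\zeta_0)=a$. For the second inclusion, if $g^{(k)}\equiv a$ then $g^{(k+1)}\equiv0$ and there is nothing to prove; otherwise $g^{(k)}-a\not\equiv0$, and at a point $\zeta_0$ with $g^{(k)}(\zeta_0)=a$ I apply Hurwitz to $g_n^{(k)}(\zeta)-h_n(z_n+\rho_n\zeta)\to g^{(k)}-a$ to obtain $\zeta_n\to\zeta_0$ with $f_n^{(k)}(w_n)=h_n(w_n)$, $w_n:=z_n+\rho_n\zeta_n$; then (b) gives $f_n^{(k+1)}(w_n)=0$, so $g^{(k+1)}(\zeta_0)=\lim_n\rho_n f_n^{(k+1)}(w_n)=0$. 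Since $k\neq2$ and $a\neq0$, \lemref{lemma2} applies and forces $g(\zeta)=\frac{a}{k!}(\zeta-b)^k$ for some constant $b$.

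This rigid conclusion is where the contradiction comes from, and it is the step I expect to require the most care (along with the $g^{(k)}\equiv a$ case split above). The spherical derivative of $\frac{a}{k!}(\zeta-b)^k$ is the translate $\zeta\mapsto q^\#(\zeta-b)$ of that of $q(\zeta)=\frac{a}{k!}\zeta^k$, so $\sup_{\C}g^\#$ equals a finite constant $M=M(|a|,k)$ depending only on $a$ and $k$, and in particular independent of the admissible constant $A$. Since $a=h(z_0)$ is known from the outset, I now invoke the freedom noted earlier and choose $A$ so large that $kA+1>M$ (legitimate, as any upper bound for $|f_n^{(k)}|$ at zeros is admissible). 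Then \lemref{lemma1} furnishes $g$ with $g^\#(0)=kA+1$, while the identification $g=\frac{a}{k!}(\zeta-b)^k$ gives $g^\#(0)\le\sup_{\C}g^\#=M<kA+1$, a contradiction. Hence $\{f_n\}$ is normal at $z_0$, and, $z_0$ being arbitrary, on all of $D$.
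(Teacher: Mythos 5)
Your proof is correct and follows essentially the same route as the paper's: Pang--Zalcman rescaling (\lemref{lemma1}) with $\alpha=k$, the inclusion chain $\overline{E}_g(0)\subset\overline{E}_{g^{(k)}}(h(z_0))\subset\overline{E}_{g^{(k+1)}}(0)$ via Hurwitz (with the same $g^{(k)}\equiv h(z_0)$ case split), identification of $g$ as $\frac{h(z_0)}{k!}(\zeta-b)^k$ through Lemmas \ref{lemma4} and \ref{lemma2}, and a contradiction with the normalization $g^\#(0)=kA+1$. The only cosmetic difference is the endgame: the paper fixes $A$ explicitly at the outset (so that $kA+1$ exceeds $\max\{|h(z_0)|+1,\,k\cdot k!/|h(z_0)|\}$) and verifies the contradiction by a case analysis on $|b|\le1$ versus $|b|>1$, whereas you defer the choice of $A$ and argue softly that $\sup_{\C}g^\#$ is a finite constant $M(|h(z_0)|,k)$ -- a legitimate move precisely because, as you note, $M$ is independent of $A$ and of $b$, so the choice of $A$ could equally have been made at the start.
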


\begin{proof} Suppose to the contrary that there exists $z_0\in D$ such that
$\{f_n\}$ is not normal at $z_0$. The convergence of $\{h_n\}$ to
$h$ implies that, in some neighborhood of $z_0$, we have
$f_n(z)=0\Rightarrow |f_n^{(k)}(z)| \le |h(z_0)|+1$ (for large
enough $n$). Thus we can apply \lemref{lemma1} with $\alpha=k$ and
$A$  such that $kA+1>\max\Big\{|h(z_0)|+1,\DF{|h(z_0)|}{(k-1)!},
\DF{k\cdot k!}{|h(z_0)|}\Big\}=\max\Big\{|h(z_0)|+1,\DF{k\cdot k!}
{|h(z_0)|}\Big\}$. So we can take an appropriate subsequence of
$\{f_n\}$ (denoted also by $\{f_n\}$ after renumbering), together
with points $z_n\to z_0$ and positive numbers $\rho_n\to0^+$ such
that$$g_n(\zeta)=\frac{f_n(z_n+\rho_n
\zeta)}{\rho^k_n}\overset\chi\Longrightarrow
g(\zeta)\quad\text{on}\quad\C,$$where $g$ is a nonconstant entire
function and $g^\sharp(\zeta)\le
g^\sharp(0)=kA+1=k(|h(z_0)|+1)+1$. We claim that
\begin{equation}
\label{10}
\overline{E}_g(0)\subset\overline{E}_{g^{(k)}}(h(z_0))
\subset\overline{E}_{g^{(k+1)}}(0).
\end{equation}

In fact, if there exists $\zeta_0\in\C$, such that $g(\zeta_0)=0$,
then since $g(\zeta)\not\equiv0$, there exist $\zeta_n$,
$\zeta_n\to\zeta_0$, such that if $n$ is sufficiently large,
$$g_n(\zeta_n)=\frac{f_n(z_n+\rho_n\zeta_n)}{\rho^k_n}=0.$$ Thus
$f_n(z_n+\rho_n\zeta_n)=0$, so that $f^{(k)}_n(z_n+\rho_n\zeta_n)
=h_n(z_n+\rho_n\zeta_n)$, i.e., that
$g^{(k)}_n(\zeta_n)=h_n(z_n+\rho_n\zeta_n)$. Since
$g^{(k)}(\zeta_0)=\lim\limits_{n\to\infty}g^{(k)}_n(\zeta_n)=h(z_0)$,
we have established the first part of the Claim  that
$\overline{E}_g(0) \subset\overline{E} _{g^{(k)}}(h(z_0))$.

Now, suppose there exists $\zeta_0\in\C$, such that
$g^{(k)}(\zeta_0)=h(z_0)$. If $g^{(k)}(\zeta)\equiv h(z_0)$, then
$g^{(k+1)}\equiv0$ and we are done. Thus we can assume that
$g^{(k)}$ is not constant and since $f^{(k)}_n(z_n+\rho_n\zeta
)-h_n(z_n+\rho_n\zeta)\Rightarrow g^{(k)} (\zeta)-h(z_0)$,  we get
by Hurwitz's Theorem that there exist $\zeta_n$,
$\zeta_n\to\zeta_0$, such that$$f^{(k)}_n(z_n+\rho_n\zeta
_n)-h_n(z_n+\rho_n\zeta_n)=g^{(k)}_n(\zeta_n)-h_n(z_n+\rho_n\zeta_n)=0.$$
Thus we have $f^{(k+1)}_n(z_n+\rho_n\zeta_n)=0$ and
$g^{(k+1)}_n(\zeta_n)=0$. Letting $n\to\infty$, we get that
$g^{(k+1)}(\zeta_0)=0$. This completes the proof of the Claim.
Now, by Lemmas \ref{lemma4} and \ref{lemma2}, we have
$g(\zeta)=\DF{h(z_0)(\zeta-\zeta_1)^k}{k!}$, where $\zeta_1$ is a
constant. Thus
$$g^\sharp(0)=\DF{|h(z_0)||\zeta_1|^{k-1}/(k-1)!}{1+|h(z_0)|^2
|\zeta_1|^{2k}/k!^2}.$$ Now, if $|\zeta_1|\le1$, then
$g^\sharp(0)\le \DF{|h(z_0)|}{(k-1)!}<kA+1$, and if $|\zeta_1|>1$,
then $g^\sharp(0)\le
\DF{|h(z_0)||\zeta_1|^{k-1}/(k-1)!}{|h(z_0)|^2|\zeta_1|^{2k}/k!^2}\le
\DF{k\cdot k!}{|h(z_0)|}<kA+1$. In either case we get a
contradiction.
\end{proof}

Similarly, we can get a slight generalization of Theorem CFZ3 for
sequences.

\begin{lemma}\label{lemma6} Let $\{f_n\}$ be a sequence of functions holomorphic on a domain
$D\subset\C$, all of whose zeros are multiple and $\{h_n\}$ be a
sequence of functions analytic on $D$ such that $h_n
{(z)}\Rightarrow h {(z)}$ on $D$, where $h(z)\ne0$ for $z\in D$ and
$s\ge2$ be an even integer. Suppose that, for each $n$,
$f_n(z)=0\Longrightarrow f''_n(z)=h_n(z)$ and
$f''_n(z)=h_n(z)\Longrightarrow f'''(z)=f^{(s)}_n(z)=0$, then
$\{f_n\}$ is normal on $D$.
\end{lemma}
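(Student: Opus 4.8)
The plan is to follow the proof of \lemref{lemma5} almost verbatim, replacing the appeal to \lemref{lemma2} by \lemref{lemma3} and specializing to $k=2$, while treating the degenerate value $s=2$ (which \lemref{lemma3} excludes) by a separate argument. Suppose, for contradiction, that $\{f_n\}$ fails to be normal at some $z_0\in D$. Since $h_n\Rightarrow h$ and $f_n(z)=0\Rightarrow f_n''(z)=h_n(z)$, in a neighborhood of $z_0$ we have $|f_n''(z)|\le|h(z_0)|+1$ whenever $f_n(z)=0$, for all large $n$. Thus \lemref{lemma1} applies with $k=2$ and $\alpha=2$, and I would fix $A\ge1$ with $2A+1>\max\{|h(z_0)|+1,\,4/|h(z_0)|\}$. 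After renumbering one obtains $z_n\to z_0$, $\rho_n\to0^+$ and $g_n(\zeta)=f_n(z_n+\rho_n\zeta)/\rho_n^2\overset\chi\Rightarrow g(\zeta)$ on $\C$, where $g$ is nonconstant and $g^\sharp(\zeta)\le g^\sharp(0)=2A+1$. Because each $g_n$ is holomorphic, Hurwitz's theorem applied to $1/g_n$ rules out poles of $g$, so $g$ is entire; by \lemref{lemma4} its order is at most $1$, and since the zeros of every $f_n$ are multiple the same holds for $g$.

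Assume first that $s\ge4$. I would establish the chain
\[
\overline{E}_g(0)\subset\overline{E}_{g''}(h(z_0))\subset\overline{E}_{g'''}(0)\cap\overline{E}_{g^{(s)}}(0).
\]
For the first inclusion, a zero $\zeta_0$ of $g$ is, by Hurwitz, a limit of zeros $\zeta_n$ of $g_n$; then $f_n(z_n+\rho_n\zeta_n)=0$ gives $g_n''(\zeta_n)=f_n''(z_n+\rho_n\zeta_n)=h_n(z_n+\rho_n\zeta_n)$, and letting $n\to\infty$ yields $g''(\zeta_0)=h(z_0)$. For the second inclusion, if $g''\equiv h(z_0)$ then $g$ is a quadratic, so $g'''\equiv0$ and $g^{(s)}\equiv0$ (as $s\ge3$) and the inclusion is immediate; otherwise $g''-h(z_0)$ is nonconstant, and at a zero $\zeta_0$ Hurwitz produces $\zeta_n\to\zeta_0$ with $f_n''(z_n+\rho_n\zeta_n)=h_n(z_n+\rho_n\zeta_n)$, whence condition (b) forces $f_n'''=f_n^{(s)}=0$ there; using $g_n'''(\zeta)=\rho_n f_n'''(z_n+\rho_n\zeta)$ and $g_n^{(s)}(\zeta)=\rho_n^{s-2}f_n^{(s)}(z_n+\rho_n\zeta)$ and passing to the limit gives $g'''(\zeta_0)=g^{(s)}(\zeta_0)=0$. \lemref{lemma3} now yields $g(\zeta)=h(z_0)(\zeta-\zeta_1)^2/2$ for some $\zeta_1$. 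Computing $g^\sharp(0)=|h(z_0)||\zeta_1|/(1+|h(z_0)|^2|\zeta_1|^4/4)$ and distinguishing $|\zeta_1|\le1$ from $|\zeta_1|>1$ gives $g^\sharp(0)\le\max\{|h(z_0)|,\,4/|h(z_0)|\}<2A+1$, contradicting $g^\sharp(0)=2A+1$.

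It remains to handle $s=2$, where \lemref{lemma3} is unavailable. Here condition (b) reads $f_n''(z)=h_n(z)\Rightarrow f_n''(z)=0$, so a zero of $f_n$ forces $h_n$ to vanish there. Since $h(z_0)\ne0$ and $h_n\Rightarrow h$, for all large $n$ the functions $f_n$ are zero-free on a fixed neighborhood of $z_0$; consequently each $g_n$ is zero-free on every compact set for large $n$, and Hurwitz makes $g$ zero-free. A nonconstant entire function of order at most $1$ without zeros has the form $g(\zeta)=e^{a\zeta+b}$ with $a\ne0$, so $g''=a^2g$ takes the value $h(z_0)\ne0$ at some $\zeta_0$. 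As $g''$ is nonconstant, Hurwitz yields $\zeta_n\to\zeta_0$ with $f_n''(z_n+\rho_n\zeta_n)=h_n(z_n+\rho_n\zeta_n)$; then condition (b) gives $f_n''(z_n+\rho_n\zeta_n)=0$, so $h_n(z_n+\rho_n\zeta_n)=0$ and, in the limit, $h(z_0)=0$, a contradiction.

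The main obstacle is exactly the case $s=2$: since \lemref{lemma3} requires $s\ge4$, the quadratic-rigidity route is closed, and one must instead exploit that condition (b) degenerates to $f_n''=h_n\Rightarrow f_n''=0$, forcing the limit $g$ to be zero-free and hence exponential. A secondary point requiring care is the verification that the holomorphic rescalings produce an \emph{entire} (pole-free) limit, and that the constant subcase $g''\equiv h(z_0)$ does not obstruct the inclusion chain when $s\ge4$.
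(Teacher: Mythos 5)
Your proof is correct. For even $s\ge4$ it is essentially the paper's own argument: run the proof of Lemma~\ref{lemma5} with $2$ in place of $k$, establish
$$\overline{E}_g(0)\subset\overline{E}_{g''}(h(z_0))\subset\overline{E}_{g'''}(0)\cap\overline{E}_{g^{(s)}}(0),$$
and conclude with Lemmas~\ref{lemma4} and~\ref{lemma3}; your treatment of the degenerate subcase $g''\equiv h(z_0)$ and the closing spherical-derivative estimate are exactly the details the paper leaves implicit. Where you genuinely depart from (and improve on) the paper is the case $s=2$. The paper's proof ends by invoking Lemma~\ref{lemma3}, which is stated only for even $s\ge4$, although Lemma~\ref{lemma6} (like Theorem~\ref{thm3}) allows $s=2$; so, as written, the paper's argument does not cover that case at all. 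Your separate argument closes it correctly: for $s=2$, condition (b) degenerates to $f_n''=h_n\Rightarrow f_n''=0$, so by condition (a), Hurwitz, and $h\ne0$ the functions $f_n$ are eventually zero-free near $z_0$; hence the rescaled limit $g$ is a nonconstant, zero-free entire function of order at most $1$ (Lemma~\ref{lemma4}), so $g(\zeta)=e^{a\zeta+b}$ with $a\ne0$; then $g''=a^2g$ attains the value $h(z_0)\ne0$, and Hurwitz produces points where $f_n''=h_n$, hence where $h_n=f_n''=0$, contradicting $h(z_0)\ne0$ in the limit. One cosmetic point, which you share with the paper's own wording in Lemma~\ref{lemma5}: besides $2A+1>\max\{|h(z_0)|+1,\,4/|h(z_0)|\}$ you should also take $A\ge|h(z_0)|+1$, so that the hypothesis of Lemma~\ref{lemma1} (namely $|f_n''|\le A$ at zeros of $f_n$) literally holds; this is harmless, since $A$ may be enlarged at will.
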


The proof is very similar to the proof of Lemma \ref{lemma5}. We
start to argue the same (with $2$ instead of $k$), and then
instead of proving \eqref{10} we prove that
$$\overline{E}_g(0)\subset\overline{E}_{g''}(h(z_0))\subset\overline{E}_{g^{(3)}}(0)
\cap\overline{E}_{g^{(s)}}(0).$$The left inclusion is proved in
the same manner. Concerning the right inclusion, we now deduce
from $f''_n(z_n+\rho_n\zeta_n)- h_n(z_n+\rho_n\zeta_n)=0$ that
$f^{(3)}_n(z_n+\rho_n\zeta_n)= f^{(s)}_n(z_n+\rho_n\zeta_n)=0$.
Then, since $\rho_nf^{(3)}_n(z_n+\rho_n\zeta)\Rightarrow
g^{(3)}(\zeta)$ in $\C$ and
$\rho^{s-2}_nf^{(s)}_n(z_n+\rho_n\zeta)\Rightarrow g^{(s)}(\zeta)$
in $\C$, we conclude that $g^{(3)}(\zeta_0)=g^{(s)}(\zeta_0)=0$.
To get the final contradiction, we apply now Lemmas \ref{lemma4}
and \ref{lemma3} instead of Lemmas \ref{lemma4} and \ref{lemma2}.

The following result will play an essential role in treating
transcendental functions which is used in the proofs of Theorems 2
and 3.

\begin{theoremB*} $($\cite{13} see also \cite[p.~117]{2}$)$
Let $f(z)$ be a function homomorphic in\linebreak
$\{z:R<|z|<\infty\}$, with essential singularity at $z=\infty$.
Then $\varlimsup\limits_{|z|\to\infty}|z|f^\#(z)=+\infty$.
\end{theoremB*}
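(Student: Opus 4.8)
The plan is to argue by contradiction. Suppose the conclusion fails; then there are $M<\infty$ and $R_0>R$ with $|z|\,f^\#(z)\le M$ for all $|z|\ge R_0$. I would first pass to logarithmic coordinates: put $z=e^u$ and $\phi(u)=f(e^u)$, which is holomorphic and $2\pi i$-periodic on the half-plane $\{\Re u>\log R_0\}$. A direct computation gives $\phi^\#(u)=|z|\,f^\#(z)$, so the hypothesis becomes $\phi^\#(u)\le M$ on that half-plane, while the essential singularity of $f$ at $\infty$ translates (via Casorati--Weierstrass, together with the removable-singularity and pole criteria) into the statement that $\phi(u)$ has no limit in $\widehat{\C}$ as $\Re u\to+\infty$. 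Everything thus reduces to showing that a $2\pi i$-periodic holomorphic function on a half-plane whose spherical derivative is bounded must have a limit along its end.

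As a structural tool I would exploit normality. For real $t_n\to+\infty$ the translates $\phi_n(u)=\phi(u+t_n)$ satisfy $\phi_n^\#\le M$ on every compact set once $n$ is large, so by Marty's theorem $\{\phi_n\}$ is normal on $\C$; a subsequence converges spherically and locally uniformly to some $\Phi$, which is meromorphic on $\C$ (or $\equiv\infty$), is again $2\pi i$-periodic, and satisfies $\Phi^\#\le M$. By \lemref{lemma4}, $\Phi$ has order at most $2$ (at most $1$ if it is entire), and writing $\Phi(\zeta)=\Gamma(e^\zeta)$ exhibits the limit as coming from a single function $\Gamma$ meromorphic on $\C^\ast$ now obeying the bound $|w|\,\Gamma^\#(w)\le M$ on \emph{all} of $\C^\ast$. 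The geometric heart of the matter is that this bound forces the images of circles to be short: integrating $f^\#$ shows that $f$ maps each circle $\{|z|=r\}$ onto a closed curve of spherical length at most $2\pi M$, and each arc of angular width $\delta$ onto a curve of spherical length at most $M\delta$. Since an essential singularity makes $\max_{|z|=r}|f|\to\infty$, these image curves must cluster, in the spherical metric, near the point $\infty$.

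When $M$ is small this traps each whole image circle inside a spherical cap about $\infty$ that cannot encircle any finite value, so $f$ omits an entire disc of values near $\infty$. By the Great Picard theorem $f$ omits at most one finite value in a neighbourhood of $\infty$, and reconciling these two facts pins $f$ down to the form $f=e^{g}$ with $g$ nonconstant; on the unbounded level set $\{\Re g=0\}$ one then computes $|z|\,f^\#=\tfrac12\,|z|\,|g'|\to\infty$, the desired contradiction. Thus in the small-$M$ regime the mechanism is clean, and the finite-order periodic limit $\Gamma$ produced above serves to organize the value-distribution bookkeeping.

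The main obstacle is the general case, in which $M$ is not small: then the short arcs may still let an image circle wind around finite values, $f$ may have many zeros and $a$-points, and the clustering argument no longer omits values outright. Here I expect the decisive step to be quantitative. Picking two values $a\ne b$ that, by Picard, are both taken infinitely often near $\infty$, one sees that $|z|\,f^\#$ is large wherever an $a$-point and a $b$-point lie within $o(r)$ of one another at radius $r$, because the bound $f^\#\le M/|z|$ caps the spherical length of $f$ along a short segment and so cannot account for the spherical jump from $a$ to $b$. The work therefore lies in showing, through Jensen's formula or the argument principle combined with the spherical-length bound $2\pi M$, that an essential singularity \emph{cannot} keep all such preimages separated by a fixed fraction of $r$ while respecting the derivative bound. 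This is the substantive analytic estimate carried out in the cited sources, and it is where I would concentrate the effort.
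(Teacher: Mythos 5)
Note first that the paper contains no proof of Theorem~B: it is quoted as a known result from \cite{13} (see also \cite[p.~117]{2}) and used as a black box in the proofs of Theorems 2 and 3, so your attempt must be judged on its own. As it stands it has a genuine gap, and you say so yourself: the case of general $M$ is ``the substantive analytic estimate carried out in the cited sources.'' But that estimate \emph{is} the theorem. Your preparatory reductions are correct --- the identity $\phi^\#(u)=|z|f^\#(z)$ for $\phi(u)=f(e^u)$, the translation/Marty argument producing a $2\pi i$-periodic limit $\Phi$ with $\Phi^\#\le M$, and the induced $\Gamma$ on $\C^\ast$ with $|w|\Gamma^\#(w)\le M$ --- but they yield nothing by themselves: the limit $\Gamma$ may perfectly well be constant, and no contradiction is ever extracted from it. What your setup gives cheaply is only the Lehto--Virtanen-type bound $\varlimsup|z|f^\#(z)\ge c_0>0$ (two spherically separated values cannot both be approximated at nearby points of comparable modulus); upgrading a positive lower bound to $+\infty$ for \emph{holomorphic} $f$ is exactly the hard content, and it is absent.

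Moreover, the one regime you claim to close, small $M$, rests on a false geometric step. Containment of the image of $\{|z|=r\}$ in a small spherical cap about $\infty$ does \emph{not} prevent that image curve from winding around finite values: the circle $\{|w|=R\}$ has spherical length $2\pi R/(1+R^2)\to0$ and lies in any prescribed cap about $\infty$ for large $R$, yet it winds once around every point of $\{|w|<R\}$ (the function $f(z)=z$, with $|z|f^\#(z)\le\tfrac12$, realizes precisely this picture). Since $a$-points of $f$ in the annuli between circles are counted by \emph{differences} of winding numbers of the image curves, your length bound controls neither, and ``$f$ omits an entire disc of values'' does not follow. The step is also internally inconsistent: if $f$ did omit a disc of values near an essential singularity, you would be finished at once by the Great Picard theorem (infinitely many omitted values), so the subsequent derivation of $f=e^g$ would be superfluous --- and it is anyway inaccurate, since a nonvanishing holomorphic function on $\{|z|>R\}$ has the form $z^m e^{g(z)}$, $m\in\mathbb{Z}$, not $e^{g(z)}$. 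In short: the soft reductions are fine, but both the case you completed and the case you deferred need to be replaced by the actual argument of \cite{13} or \cite{2}.
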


For the proof of Theorem 2, we need also the following Lemma.

\begin{lemma}\label{lemma7} Let $h $ be a holomorphic function on $D,$ with a
zero of order $\ell(\ge1)$ at $z_0\in D.$ Let $\{f_n\}^\infty_{n=1}$ be a
sequence of functions with zeros of multiplicity at least $k$, such that
$\{f_n\}$ and $h$ satisfy conditions {\rm(a)} and {\rm(b)} of Theorem 2. Let
$\{\alpha_n\}^\infty_{n=1}$ be a sequence of nonzero numbers such
that $\alpha_n\to0$ as $n\to\infty$. Then $\{f_n(z_0+\alpha_n\zeta)/\alpha^{k+\ell}_n\}^
\infty_{n=1}$ is normal in $\C^\ast$.
\end{lemma}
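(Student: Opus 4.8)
The plan is to reduce the statement to \lemref{lemma5} by choosing the normalizing exponent $k+\ell$ in such a way that the rescaled functions inherit conditions (a) and (b) while the rescaled version of $h$ converges to a limit that is zero-free away from the origin. Write $h(z)=(z-z_0)^\ell\psi(z)$ with $\psi$ holomorphic near $z_0$ and $c:=\psi(z_0)=h^{(\ell)}(z_0)/\ell!\ne0$. I would set
$$g_n(\zeta)=\DF{f_n(z_0+\alpha_n\zeta)}{\alpha_n^{k+\ell}},\qquad
h_n(\zeta)=\DF{h(z_0+\alpha_n\zeta)}{\alpha_n^\ell}=\zeta^\ell\psi(z_0+\alpha_n\zeta).$$
For any compact subset of $\C$ and all large $n$, the point $z_0+\alpha_n\zeta$ lies in $D$, so $g_n$ and $h_n$ are defined there; moreover $h_n(\zeta)\Rightarrow c\zeta^\ell$ on $\C$, since $\psi(z_0+\alpha_n\zeta)\to\psi(z_0)=c$ uniformly on compact sets as $\alpha_n\to0$.

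Next I would verify that $\{g_n\}$ and $\{h_n\}$ satisfy, for each $n$, the hypotheses of \lemref{lemma5}. Differentiation gives $g_n^{(j)}(\zeta)=\alpha_n^{\,j-k-\ell}f_n^{(j)}(z_0+\alpha_n\zeta)$, so in particular $g_n^{(k)}(\zeta)=f_n^{(k)}(z_0+\alpha_n\zeta)/\alpha_n^\ell$, and hence the identity $g_n^{(k)}(\zeta)=h_n(\zeta)$ is exactly $f_n^{(k)}(z_0+\alpha_n\zeta)=h(z_0+\alpha_n\zeta)$. Consequently $g_n(\zeta)=0$ forces $f_n(z_0+\alpha_n\zeta)=0$, whence $f_n^{(k)}(z_0+\alpha_n\zeta)=h(z_0+\alpha_n\zeta)$ by (a), i.e.\ $g_n^{(k)}(\zeta)=h_n(\zeta)$; and $g_n^{(k)}(\zeta)=h_n(\zeta)$ forces $f_n^{(k+1)}(z_0+\alpha_n\zeta)=0$ by (b), i.e.\ $g_n^{(k+1)}(\zeta)=0$. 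The zeros of $g_n$ are zeros of $f_n$, hence of multiplicity at least $k$, and $k\ne2$ by the hypothesis of Theorem~2, so every requirement of \lemref{lemma5} is in force except that its limit function must be nonvanishing.

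This last point is where the restriction to $\C^\ast$ enters, and it is the one genuine thing to watch rather than a routine computation: the limit $c\zeta^\ell$ vanishes at $\zeta=0$, so \lemref{lemma5} cannot be invoked on a neighbourhood of the origin. However, given any $\zeta^\ast\in\C^\ast$, I would choose a disc $U$ with $\zeta^\ast\in U$ and $\ov U\subset\C^\ast$. On $U$ the limit $c\zeta^\ell$ is zero-free and $h_n\Rightarrow c\zeta^\ell$, so \lemref{lemma5} applies to $\{g_n\}$ on $U$ and yields normality of $\{g_n\}$ there. Since $\zeta^\ast\in\C^\ast$ was arbitrary, $\{g_n\}$ is normal at every point of $\C^\ast$, hence normal on $\C^\ast$, which is the assertion. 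The only conceptual content beyond bookkeeping is thus the choice of the exponent $k+\ell$: it is the unique power that simultaneously turns condition (a) into $g_n^{(k)}=h_n$ and makes $h_n$ converge to a finite, generically nonzero limit, and it is precisely the vanishing of that limit at $\zeta=0$ that confines the conclusion to the punctured plane.
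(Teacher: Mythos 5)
Your proposal is correct and follows essentially the same route as the paper: both rescale to $G_n(\zeta)=f_n(z_0+\alpha_n\zeta)/\alpha_n^{k+\ell}$ and $h_n(\zeta)=h(z_0+\alpha_n\zeta)/\alpha_n^{\ell}$, check that conditions (a) and (b) transfer, note $h_n\Rightarrow b(0)\zeta^\ell$ which is nonvanishing off the origin, and invoke \lemref{lemma5}. The only cosmetic difference is that the paper applies \lemref{lemma5} once with $D=\C^\ast$, whereas you apply it on discs compactly contained in $\C^\ast$ and use locality of normality; these are interchangeable.
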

\begin{proof}
Without loss of generality, we may assume that $z_0=0$. In a
neighborhood of the origin we have $h(z)=z^{\ell}b(z)$, where
$b(z)$ is analytic, $b(0)\ne0$. Define
$r_n(\zeta)=\zeta^{\ell}b(\alpha_n\zeta)$. We will show that the
assumptions of \lemref{lemma5} hold in $\C^\ast$ for the sequence
$\{G_n(\zeta)\}^\infty_{n=1}$,
$G_n(\zeta):=f_n(\alpha_n\zeta)/\alpha^{k+\ell}_n$ and
$\{r_n(\zeta)\}^\infty_{n=1}$. First, we have that
$r_n(\zeta)\Rightarrow b(0)\zeta^{\ell}$ on $\C$ and
$\zeta^{\ell}\ne0$ in $\C^\ast$. Assume that $G_n(\zeta)=0$. Then
$f_n(\alpha_n\zeta)=0$ and
$f^{(k)}_n(\alpha_n\zeta)=(\alpha_n\zeta)^{\ell}b(\alpha_n\zeta)$,
and we get that $G^{(k)}_n(\zeta)=r_n(\zeta)$. Suppose now that
$G^{(k)}_n(\zeta)=r_n(\zeta)$. This means that
$f^{(k)}_n(\alpha_n\zeta)=h(\alpha_n\zeta)$ and thus
$f^{(k+1)}_n(\alpha_n\zeta)=0$. We have $G^{(k+1)}_n(\zeta)=0$,
and thus the assumptions of \lemref{lemma5} hold. Hence we deduce
that $\{G_n(\zeta)\}$ is normal in $\C^\ast$, and the lemma is
proved.
\end{proof}
The following lemma plays a similar role in the proof of Theorem
3, to the role of \lemref{lemma7} in the proof of Theorem 2.
\begin{lemma}\label{lemma8} Let $h $ be a holomorphic function on $D,$ with a
zero of order $\ell(\ge1)$ at $z_0\in D.$ Let
$\{f_n\}^\infty_{n=1}$ be a sequence of functions whose zeros are
multiple, such that $\{f_n\}$ and $h$ satisfy conditions {\rm(a)}
and {\rm(b)} of Theorem 3. Let $\{\alpha_n\}^\infty_{n=1}$ be a
sequence of nonzero numbers such that $\alpha_n\to0$ as
$n\to\infty$. Then
$\{f_n(z_0+\alpha_n\zeta)/\alpha^{2+\ell}_n\}^\infty_{n=1}$ is
normal in $\C^\ast$.
\end{lemma}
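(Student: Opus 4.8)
The plan is to mimic exactly the proof of \lemref{lemma7}, replacing the role of \lemref{lemma5} by its Theorem CFZ3 analogue \lemref{lemma6}, and using the order $2$ in place of the general $k$. As in \lemref{lemma7}, I would first reduce to $z_0=0$ without loss of generality, then factor the zero of $h$ at the origin by writing $h(z)=z^{\ell}b(z)$ with $b$ analytic and $b(0)\ne0$ on a neighborhood of $0$. Setting $G_n(\zeta):=f_n(\alpha_n\zeta)/\alpha^{2+\ell}_n$ and $r_n(\zeta):=\zeta^{\ell}b(\alpha_n\zeta)$, the goal is to verify that the hypotheses of \lemref{lemma6} hold for the sequences $\{G_n\}$ and $\{r_n\}$ on the punctured plane $\C^\ast$, which then immediately delivers the desired normality.

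The verification splits into the structural hypotheses and the two implications. On the structural side I would observe that $r_n(\zeta)\Rightarrow b(0)\zeta^{\ell}$ on $\C$, that the limit $b(0)\zeta^{\ell}$ is nowhere zero on $\C^\ast$, and that each $G_n$ inherits multiple zeros from $f_n$. For implication (a), if $G_n(\zeta)=0$ then $f_n(\alpha_n\zeta)=0$, so condition (a) of Theorem 3 gives $f''_n(\alpha_n\zeta)=h(\alpha_n\zeta)=\alpha^{\ell}_n\zeta^{\ell}b(\alpha_n\zeta)$; since the chain rule yields $G''_n(\zeta)=f''_n(\alpha_n\zeta)/\alpha^{\ell}_n$, dividing by $\alpha^{\ell}_n$ produces $G''_n(\zeta)=r_n(\zeta)$. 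For implication (b), if $G''_n(\zeta)=r_n(\zeta)$, then unwinding the scaling gives $f''_n(\alpha_n\zeta)=h(\alpha_n\zeta)$, whence condition (b) of Theorem 3 forces $f'''_n(\alpha_n\zeta)=f^{(s)}_n(\alpha_n\zeta)=0$; because $G'''_n(\zeta)$ and $G^{(s)}_n(\zeta)$ equal $f'''_n(\alpha_n\zeta)$ and $f^{(s)}_n(\alpha_n\zeta)$ up to nonzero powers of $\alpha_n$, both vanish, giving $G'''_n(\zeta)=G^{(s)}_n(\zeta)=0$. With all hypotheses of \lemref{lemma6} confirmed, $\{G_n\}$ is normal on $\C^\ast$.

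The only subtlety — and the reason the conclusion is confined to $\C^\ast$ rather than all of $\C$ — is the requirement in \lemref{lemma6} that the limit coefficient be zero-free. Since $h$ vanishes at $z_0$ to order $\ell$, the rescaled coefficient $r_n$ converges to $b(0)\zeta^{\ell}$, which is zero-free precisely away from the origin; thus the argument cannot be pushed across $\zeta=0$, and I do not expect this lemma, by itself, to yield normality there. Beyond this point the proof is a routine transcription of \lemref{lemma7}, so I anticipate no further obstacle.
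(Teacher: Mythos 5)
Your proposal is correct and is exactly the argument the paper intends: the paper's own ``proof'' of this lemma is the single remark that it is analogous to the proof of Lemma~\ref{lemma7}, using Lemma~\ref{lemma6} in place of Lemma~\ref{lemma5}, which is precisely what you carry out (with the correct rescalings $G''_n(\zeta)=f''_n(\alpha_n\zeta)/\alpha^{\ell}_n$, $G'''_n$, $G^{(s)}_n$ and the zero-free limit $b(0)\zeta^{\ell}$ on $\C^\ast$). No gaps; your closing remark about why normality is only claimed on $\C^\ast$ is also accurate.
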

The proof of this lemma is analogous  to the proof of
\lemref{lemma7}. Of course, we use  \lemref{lemma6} instead of
 \lemref{lemma5}.
\section{Proof of Theorem 1}

In this section, we do not use any of the preliminary results.
The proof is elementary.

By Theorem CFZ1, $\CF$ is normal at every point $z_0\in D$ at
which $h(z_0)\ne0$(so immediately we get that $\CF$ is
quasinormal). So let $z_0$ be a zero of $h$ of order $\ell(\ge1)$.
Without loss of generality, we can assume that $z_0=0$, and then
$h(z)=z^{\ell}b(z)$. Here $b$ is an analytic function in
$\Delta(0,\delta)$ and $b(z)\ne0$ there. We assume that
$0<\delta<1$, and by taking a subsequence and renumbering, we can
assume that
\begin{equation}
\label{11}f_n\Longrightarrow
f\quad\text{in}\quad\Delta'(0,\delta).
\end{equation}
Now, if $f$ is holomorphic in $\Delta'(0,\delta)$, we deduce by
the maximum principle that $f_n\Rightarrow f$ on
$\Delta(0,\delta)$, and we are done. So let us assume that
$f_n\Rightarrow\infty$ in $\Delta'(0,\delta)$. Fix $\eta$,
$0<\eta<\delta$. By the minimum principle (i.e., the maximum
principle for $\{1/f_n\}$), there exists $N=N(\eta)$, such that
for every $n\ge N$, $f_n$ has $k_n(k_n\ge1)$ simple zeros in
$\overline{\Delta}(0,\eta)-\{0\}$, say $\alpha^{(n)}_1$,
$\alpha^{(n)}_2$, $\cdots$, $\alpha^{(n)}_{k_n}$ (otherwise we get
that $f_n\Rightarrow\infty$ in $\Delta(0,\eta)$ and we are done).
Since $f_n\Rightarrow\infty$ in $\Delta'(0,\delta)$, we get that
\begin{equation}
\label{12}\max\limits_{1\le j\le k_n}\{|\alpha^{(n)}_j|\}\to0,\quad\text{as}\quad n\to\infty.
\end{equation}
We can write
$f_n(z)=t_n(z)\prod\limits_{i=1}^{k_n}\left(z-\alpha^{(n)}_i\right)$,
where $t_n(z)\ne0$ for $z\in\overline{\Delta}(0,\eta)$ and $n\ge
N$. Since $\eta<1$, we get by \eqref{12} that
$\DF{t_n(z)}{b(z)}\Rightarrow\infty$ in
$\overline{\Delta}(0,\eta)$. By condition (a) of Theorem~1, we
have, for $n\ge N$,
$f'_n(\alpha^{(n)}_j)=\alpha^{(n)\ell}_jb(\alpha^{(n)}_j)$, $1\le
j\le k_n$. By calculation,
$$f'_n(z)=t'_n(z)\prod\limits_{i=1}^{k_n}\left(z-\alpha^{(n)}_i\right)+t_n(z)\left[
\prod\limits_{i=1}^{k_n}\left(z-\alpha^{(n)}_i\right)\right]',$$and so
\begin{equation}
\label{13} t_n\left(\alpha^{(n)}_j\right)\left[
\prod\limits_{i=1}^{k_n}\left(z-\alpha^{(n)}_i\right)\right]'\Bigg|_{z=\alpha^{(n)}_j}=
\alpha^{(n)\ell}_jb\left(\alpha^{(n)}_j\right).
\end{equation}
Define, for $n\ge N$,
$$M_n(z):=\DF{t_n(z)}{b(z)}\left[
\prod\limits_{i=1}^{k_n}\left(z-\alpha^{(n)}_i\right)\right]'-z^{\ell}.$$
By \eqref{13} we get that $M_n\left(\alpha^{(n)}_j\right)=0$ for
$1\le j\le k_n$, and so for $n\ge N,$ $M_n$ has at least $k_n$
zeros in $\Delta'(0,\eta)$, including multiplicities. Here we use
the fact $h$ has no common zero with any $f_n.$ Since such a zero
must be $z=0$ and would be a zero of order $m$ (must be $m\ge2$ by
condition (a)) of $f_n$, and it would be a zero of order $m-1$ of
$M_n$ (if $\ell>m-1$) or even of order $\ell<m-1$ (if $\ell<m-1$),
then we would not know that the number of zeros (including
multiplicities) of $M_n$ is at least $k_n$. This fact, under the
assumption that there are no common zeros, will lead to the
desired contradiction.

\begin{claim}\label{claim2} $\DF{t_n(z)}{b(z)}\left[
\prod\limits_{i=1}^{k_n}\left(z-\alpha^{(n)}_i\right)\right]'\Rightarrow\infty$\quad
in\quad $\Delta'(0,\eta)$.
\end{claim}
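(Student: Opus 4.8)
The plan is to fix a compact set $K\subset\Delta'(0,\eta)$, set $\epsilon:=\min_{z\in K}|z|>0$, and prove that $\left|\frac{t_n(z)}{b(z)}\left[\prod_{i=1}^{k_n}(z-\alpha^{(n)}_i)\right]'\right|\to\infty$ uniformly on $K$. Writing $P_n(z):=\prod_{i=1}^{k_n}(z-\alpha^{(n)}_i)$, the naive idea—to bound $|P_n'|$ from below and invoke $t_n/b\Rightarrow\infty$—fails, since $|P_n(z)|$, and hence $|P_n'(z)|$, can be as small as $(\epsilon/2)^{k_n}$ when $k_n\to\infty$. Instead I would pass to the logarithmic derivative and regroup, using that $P_n(z)\ne0$ on $K$ for large $n$:
\begin{equation*}
\frac{t_n(z)}{b(z)}P_n'(z)=\frac{t_n(z)P_n(z)}{b(z)}\cdot\frac{P_n'(z)}{P_n(z)}=\frac{f_n(z)}{b(z)}\sum_{i=1}^{k_n}\frac1{z-\alpha^{(n)}_i}.
\end{equation*}
The factor $f_n/b$ tends to $\infty$ uniformly on $K$: indeed $f_n\Rightarrow\infty$ on $\Delta'(0,\delta)\supset K$ by the standing case assumption, while $b$ is continuous and nonvanishing on $\overline{\Delta}(0,\delta)$, hence bounded on $K$.

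It then remains to bound the logarithmic derivative $\sum_i(z-\alpha^{(n)}_i)^{-1}$ from below on $K$. Here I would use that all the zeros cluster at the origin: by \eqref{12}, for all large $n$ we have $\max_j|\alpha^{(n)}_j|<\epsilon/2$, so $|\alpha^{(n)}_i/z|<\tfrac12$ for every $i$ and every $z\in K$. For such $z$,
\begin{equation*}
\Re\!\left(z\,\frac{P_n'(z)}{P_n(z)}\right)=\sum_{i=1}^{k_n}\Re\frac1{1-\alpha^{(n)}_i/z}\ge c_0\,k_n\ge c_0,
\end{equation*}
since $\Re\frac1{1-w}$ is bounded below by an absolute constant $c_0>0$ whenever $|w|\le\tfrac12$ (a quantitative form of Lucas' theorem: the critical points of $P_n$ lie in the convex hull of the $\alpha^{(n)}_i$, i.e.\ near $0$, so $P_n'$ cannot vanish on $K$). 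Consequently $\left|\frac{P_n'(z)}{P_n(z)}\right|\ge c_0/|z|\ge c_0/\eta$ on $K$, and multiplying by $|f_n/b|\to\infty$ yields the claim.

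The step I expect to be the crux is the regrouping $\frac{t_n}{b}P_n'=\frac{f_n}{b}\cdot\frac{P_n'}{P_n}$, together with the realization that the blow-up must be extracted from $f_n$ (which is known to explode) rather than from $P_n'$ (whose size behaves erratically as $k_n\to\infty$); once this is in place, the uniform lower bound $c_0/\eta$ for the logarithmic derivative is elementary and, crucially, independent of $k_n$. The only care needed is uniformity in $z\in K$ and the harmlessness of $k_n\to\infty$, both of which are handled because the per-summand real-part estimate holds simultaneously for all $z\in K$ and all indices $i$ as soon as $\max_j|\alpha^{(n)}_j|<\epsilon/2$.
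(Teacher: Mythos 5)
Your proof is correct, and at its core it runs on the same engine as the paper's: the $j$-th summand in the product-rule expansion of $\frac{t_n(z)}{b(z)}P_n'(z)$ is exactly $\frac{f_n(z)}{b(z)}\cdot\frac{1}{z-\alpha^{(n)}_j}$, and since by \eqref{12} all zeros $\alpha^{(n)}_j$ cluster at the origin while $z$ stays away from it, these summands point in nearly the same direction and cannot cancel. The difference lies in how non-cancellation is certified. The paper keeps the sum $\sum_j\frac{t_n(z)}{b(z)}\prod_{i\ne j}\left(z-\alpha^{(n)}_i\right)$, notes that the ratio of any two summands, $\frac{z-\alpha^{(n)}_j}{z-\alpha^{(n)}_1}$, tends uniformly to $1$, so that for large $n$ all summands lie in a common quarter-plane, and then uses the elementary fact that for vectors in a common quarter-plane the modulus of a sum dominates the modulus of each term; the whole expression is thus bounded below by the single term $\frac{f_n(z)}{b(z)\left(z-\alpha^{(n)}_1\right)}\Rightarrow\infty$. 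You instead factor $f_n/b$ out completely and lower-bound the scalar logarithmic derivative via real parts, $\Re\bigl(zP_n'(z)/P_n(z)\bigr)=\sum_i\Re\frac{1}{1-\alpha^{(n)}_i/z}\ge c_0k_n$, valid once $\max_i|\alpha^{(n)}_i|\le|z|/2$. The two mechanisms are equivalent in spirit, but yours is more quantitative: it yields a lower bound $c_0/\eta$ for $|P_n'/P_n|$ on $K$ that is uniform in $n$ (indeed growing like $k_n$), whereas the paper merely dominates the sum by one of its terms; both handle $k_n\to\infty$ correctly. Two cosmetic points: the appeal to Lucas' theorem is decorative, since your real-part estimate already shows $P_n'\ne0$ on $K$; and you only need $|b|$ bounded above on $K$ (by continuity on the compact $K\subset\Delta(0,\delta)$), not that $b$ be nonvanishing on $\overline{\Delta}(0,\delta)$, a closed disc to which $b$ need not even extend.
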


\begin{proof}
We write
\begin{equation}
\label{14} \DF{t_n(z)}{b(z)}\left[
\prod\limits_{i=1}^{k_n}\left(z-\alpha^{(n)}_i\right)\right]'=\sum\limits_{j=1}^{k_n}\DF{t_n(z)}{b(z)}
\prod\limits_{i=1,i\ne j}^{k_n}\left(z-\alpha^{(n)}_i\right).
\end{equation}
For any $\ve$, $0<\ve<\eta$, we have that
\begin{equation}
\label{15}\DF{t_n(z)}{b(z)}\prod\limits_{i=2}^{k_n}\left(z-\alpha^{(n)}_i\right)\Longrightarrow\infty
\quad\text{in}\quad\overline{R}_{\ve,\eta}:=\{z:\ve\le|z|\le\eta\}.
\end{equation}
Indeed,
$\DF{t_n(z)}{b(z)}\prod\limits_{i=2}^{k_n}\big(z-\alpha^{(n)}_i\big)
=\DF{f_n(z)}{b(z)\big(z-\alpha^{(n)}_1\big)}$, and since $\eta<1$
and by \eqref{11} and \eqref{12}, this term tends uniformly to
$\infty$ in $\overline{R}_{\ve,\eta}$.

Now, for every $j$, $2\le j\le k_n$, we have that
$$\DF{\DF{t_n(z)}{b(z)}\prod\limits_{i=2}^{k_n}\left(z-\alpha^{(n)}_i\right)}
{\DF{t_n(z)}{b(z)}\prod\limits_{i=1,i\ne
j}^{k_n}\left(z-\alpha^{(n)}_i\right)}
=\DF{z-\alpha^{(n)}_j}{z-\alpha^{(n)}_1},$$and by \eqref{12} this
term tends uniformly to $1$ as $n\to\infty$. This means, that for
every $1\le j\le k_n$, and $z\in\overline{R}_{\ve,\eta}$,
$\DF{t_n(z)}{b(z)}\prod\limits_{i=1,i\ne
j}^{k_n}\big(z-\alpha^{(n)}_i\big)$ lies in the same quarter
plane, that is,
\begin{equation}
\label{16}\Pi_{n,z}:=\left\{z:\arg\left[\DF{t_n(z)}{b(z)}\prod\limits_{i=2}^{k_n}\left(z-\alpha^{(n)}_i\right)\right]
-\DF\pi4<\arg z<\arg\left[\DF{t_n(z)}{b(z)}\prod\limits_{i=2}^{k_n}\left(z-\alpha^{(n)}_i\right)\right]
+\DF\pi4\right\},
\end{equation}
for large enough $n$.

Now, if $a$ and $b$ are two complex numbers in the same quarter
plane, then $a+b$ also belongs to that quarter plane and
$|a+b|\ge|a|$, $|b|$. We then conclude by \eqref{16} that for each
$z\in\overline{R}_{\ve,\eta}$, we have for large enough $n$,
$$\left|\DF{t_n(z)}{b(z)}\left[\prod\limits_{i=1}^{k_n}\left(z-\alpha^{(n)}_i\right)
\right]'\right|\ge\left|\DF{t_n(z)}{b(z)}\prod\limits_{i=2}^{k_n}\left(z-\alpha^{(n)}_i\right)
\right|,$$ and by \eqref{15} and \eqref{14},  the Claim  is
proved.\end{proof}

Now,
$\DF{t_n(z)}{b(z)}\bigg[\prod\limits_{i=1}^{k_n}\big(z-\alpha^{(n)}_i\big)
\bigg]'$ has for large enough $n$ exactly $k_n-1$ zeros in
$\Delta(0,\eta)$ (by Theorem Lu). Then for large enough $n$ we
have, for every $z$, $|z|=\eta$,
$$\left|M_n(z)-\DF{t_n(z)}{b(z)}\left[
\prod\limits_{i=1}^{k_n}\left(z-\alpha^{(n)}_i\right)\right]'\right|=|z^{\ell}|<
\left|\DF{t_n(z)}{b(z)}\left[
\prod\limits_{i=1}^{k_n}\left(z-\alpha^{(n)}_i\right)\right]'\right|,$$
and by Rouche's Theorem, we get that $M_n$ has $k_n-1$ zeros in
$\Delta(0,\eta)$, a contradiction. Theorem 1 is
proved.\quad\hfill$\square$

\section{Proof of Theorem 2}
This proof is similar to the proof of Theorem 1 in \cite{4}. By
our Theorem 1, we need only to prove the case that $k\ge3$. By
Theorem CFZ2, $\CF$ is normal at every point $z_0\in D$ at which
$h(z_0)\ne0$ (so that $\CF$ is quasinormal in $D$ ). Consider
$z_0\in D$  such that $h(z_0)=0$. Without loss of generality, we
can assume that $z_0=0$, and then $h(z)=z^{\ell}b(z)$, where
$\ell(\ge1)$ is an integer, $b(z)\ne0$ is an analytic function in
$\Delta(0,\delta)$. We take a subsequence
$\{f_n\}^\infty_{1}\subset\CF$, and we want to prove that
$\{f_n\}$ is not normal at $z=0.$ Suppose by negation that
$\{f_n\}$ is not normal at $z=0.$ Since $\{f_n\}$ is normal in
$\Delta'(0,\delta),$ we can assume (after renumbering) that
$f_n\Rightarrow F $ on $\Delta'(0,\delta)$. If
$F(z)\not\equiv\infty,$ then it is a holomorphic function; hence
by the maximum principle, $F $ extends to be analytic also at
$z=0$, and so $f_n\Rightarrow F$ on $\Delta(0,\delta) $, and we
are done. Hence we assume that
\begin{equation}
\label{17} f_n(z)\Longrightarrow \infty\quad\text{on}\quad
\Delta'(0,\delta).
\end{equation}

Define $\mathcal F_1=\left\{F=\DF{f_n}{h}:n\in\mathbb N\right\}.$
It is enough to prove that $\mathcal F_1$ is normal in
$\Delta(0,\delta).$ Indeed, if (after renumbering)
$\DF{f_n(z)}{h}\Rightarrow H(z) $ on $\Delta(0,\delta),$ then
since $h\ne0$ in $\Delta'(0,\delta)$, it follows  from \eqref{17}
that $H(z)\equiv\infty$ in $\Delta'(0,\delta)$, and thus
$H(z)\equiv\infty$ also in $\Delta(0,\delta).$ In particular,
$\DF{f_n}{h}(z)\ne0$ on each compact subset of $\Delta(0,\delta)$
for large enough $n.$ Since $h\ne0$ on $\Delta'(0,\delta)$ and
since $f_n(0)\ne0$ for every $n\ge1$ by  assumptions of the
theorem, we obtain $f_n(z)\ne0$ on each compact subset of
$\Delta(0,\delta)$ for large enough $n.$ Then by the minimum
principle, it follows from \eqref{17} that $f_n(z)\Rightarrow
\infty$ on $\Delta(0,\delta)$, and this implies the normality of
$\mathcal F.$ So suppose to the contrary that $\CF_1$ is not
normal at $z=0$. By \lemref{lemma1} and the assumptions of Theorem
2, there exist (after renumbering) points $z_n\to0$,
$\rho_n\to0^+$ and a nonconstant meromorphic function on $\C$,
$g(\zeta)$ such that
\begin{equation}\label{18} g_n(\zeta)=\frac{F_n(z_n+\rho_n\zeta)}
{\rho^k_n}=\frac{f_n(z_n+\rho_n\zeta)}{\rho^k_nh(z_n+\rho_n\zeta)}
\overset\chi\Longrightarrow g(\zeta)\quad\text{on}\quad\C,
\end{equation}
all of whose zeros have multiplicity at least $k$ and
\begin{equation}
\label{19}\text{for every}\quad\zeta\in\C,\quad g^\sharp(\zeta)\le
g^\sharp(0)=kA+1,
\end{equation}
where $A>1$ is a constant. Here we have used \lemref{lemma1} with
$\alpha=k$. Observe that $g_n(z)=0$ implies $g^{(k)}_n(\zeta)=1$ and
so $A$ can be  chosen to be any number such that $A\ge1.$ After
renumbering we can assume that $\{z_n/\rho_n\}^\infty_{n=1}$
converges. We separate now into two cases.

\medskip

\noindent\underbar{Case (A)}
\begin{equation}
\label{20}\frac{z_n}{\rho_n}\to\infty.
\end{equation}

\begin{claim} $(1)$ $g(\zeta)=0\Longrightarrow g^{(k)}(\zeta)=1$;\
$(2)$ $g^{(k)}(\zeta)=1\Longrightarrow g^{(k+1)}(\zeta)=0$.
\end{claim}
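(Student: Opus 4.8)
The plan is to transfer conditions (a) and (b) of Theorem 2 from $f_n$ to the limit $g$ through the rescaling in \eqref{18}, the point being that in Case (A) the zero of $h$ at the base point contributes only negligible correction terms. Writing $w=z_n+\rho_n\zeta$ and $h(z)=z^{\ell}b(z)$ with $b(0)\ne0$, I first record the Leibniz expansion obtained by differentiating $f_n(w)=\rho_n^k h(w)\,g_n(\zeta)$:
$$f_n^{(p)}(w)=\sum_{j=0}^{p}\binom{p}{j}h^{(p-j)}(w)\,\rho_n^{\,k-j}\,g_n^{(j)}(\zeta),\qquad p=k,\,k+1.$$
Dividing the $p=k$ identity by $h(w)$, and the $p=k+1$ identity by $h(w)/\rho_n$, isolates the top term $g_n^{(k)}(\zeta)$, respectively $g_n^{(k+1)}(\zeta)$, while every remaining term carries a factor $\rho_n^{m}\,h^{(m)}(w)/h(w)$ with $m\ge1$.

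The decisive estimate is that these factors vanish in the limit. From
$$\frac{h^{(m)}(w)}{h(w)}=\sum_{i=0}^{\min(m,\ell)}\binom{m}{i}\frac{\ell!}{(\ell-i)!}\,w^{-i}\,\frac{b^{(m-i)}(w)}{b(w)}$$
and the boundedness of $b^{(m-i)}/b$ near $0$, one gets $|h^{(m)}(w)/h(w)|\le C|w|^{-\min(m,\ell)}$. By the hypothesis \eqref{20} of Case (A), $|z_n|/\rho_n\to\infty$, so $|w|\ge\frac12|z_n|$ for $\zeta$ in a fixed compact set and $n$ large; hence $\rho_n^{m}|h^{(m)}(w)/h(w)|\le C'\rho_n^{\,m-\min(m,\ell)}(\rho_n/|z_n|)^{\min(m,\ell)}\to0$ uniformly on compacta, since $\min(m,\ell)\ge1$ and $\rho_n/|z_n|\to0$.

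To prove (1), suppose $g(\zeta_0)=0$. As $g\not\equiv0$, Hurwitz's theorem yields $\zeta_n\to\zeta_0$ with $g_n(\zeta_n)=0$, so $f_n(w_n)=0$ at $w_n:=z_n+\rho_n\zeta_n$ and, by (a), $f_n^{(k)}(w_n)=h(w_n)$. Dividing the $p=k$ identity by $h(w_n)$ and letting $n\to\infty$, using $g_n^{(j)}(\zeta_n)\to g^{(j)}(\zeta_0)$ (legitimate since $\zeta_0$ is not a pole of $g$) together with the decisive estimate, leaves exactly $g^{(k)}(\zeta_0)=1$. To prove (2), suppose $g^{(k)}(\zeta_0)=1$; if $g^{(k)}\equiv1$ then $g^{(k+1)}\equiv0$ and we are done, so assume $g^{(k)}\not\equiv1$. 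The functions $u_n(\zeta):=f_n^{(k)}(z_n+\rho_n\zeta)/h(z_n+\rho_n\zeta)$ converge to $g^{(k)}$ locally uniformly near $\zeta_0$ by the $p=k$ expansion and the estimate, so Hurwitz applied to $u_n-1$ produces $\zeta_n\to\zeta_0$ with $f_n^{(k)}(w_n)=h(w_n)$; condition (b) then gives $f_n^{(k+1)}(w_n)=0$. Dividing the $p=k+1$ identity by $h(w_n)/\rho_n$ and passing to the limit yields $g^{(k+1)}(\zeta_0)=0$.

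The main obstacle is the decisive estimate of the second paragraph: one must check that the denominator $h$, which vanishes at the base point $z_0=0$, does not obstruct the transfer of (a) and (b). This is precisely where \eqref{20} enters, keeping $|w|$ comparable to $|z_n|\gg\rho_n$ so that all correction terms $\rho_n^{m}h^{(m)}(w)/h(w)$ are negligible; the interchange of limit and differentiation near $\zeta_0$ is justified because $g$ is holomorphic there, which completes the argument.
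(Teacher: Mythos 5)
Your argument is correct, but it is organized differently from the paper's, and the difference is worth recording. For part (1) the paper estimates nothing: since $f_n$ and $h$ have no common zeros and all zeros of $f_n$ have multiplicity at least $k$, condition (a) forces each zero $w_n=z_n+\rho_n\zeta_n$ of $f_n$ to have multiplicity exactly $k$, so $\zeta_n$ is a zero of $g_n$ of multiplicity exactly $k$; in the Leibniz expansion every term with $j<k$ then vanishes identically, giving $g_n^{(k)}(\zeta_n)=f_n^{(k)}(w_n)/h(w_n)=1$ on the nose. That version of part (1) makes no use of \eqref{20}, which is why the paper can quote it verbatim in Case (B), where $z_n/\rho_n\to\alpha$ and your decisive estimate genuinely fails: there $w=\rho_n(\zeta+z_n/\rho_n)$, so $\rho_n^m h^{(m)}(w)/h(w)$ is of order $|\zeta+\alpha|^{-m}$ rather than $o(1)$, and indeed $g$ acquires a pole at $-\alpha$. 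Your part (1), resting on that estimate, is therefore confined to Case (A) --- perfectly adequate for the Claim as stated, but not reusable later in the paper. For part (2) the two arguments agree in substance: the paper exploits \eqref{20} by trading the $\zeta$-dependent denominator $h(z_n+\rho_n\zeta)$ in \eqref{18} for the constant $z_n^{\ell}$ (up to the factor $b(0)$, which the paper silently absorbs), so that differentiation in $\zeta$ is immediate, and then trades back; you keep $h$ in the denominator and control the Leibniz correction terms via $\rho_n^m|h^{(m)}(w)/h(w)|\le C\rho_n^m|w|^{-\min(m,\ell)}\to0$. Your bookkeeping has two small advantages: it makes quantitatively explicit why \eqref{20} is exactly what is needed, and you dispose of the degenerate case $g^{(k)}\equiv1$ before invoking Hurwitz, a step the paper's write-up of this Claim omits (though it does appear in the proof of Lemma~\ref{lemma5}).
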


\begin{proof} Observe that from \eqref{18} and the fact that $h(z)\ne0$ in
$\Delta'(0,\delta),$ it follows that $g$ is an entire function.
Suppose that $g(\zeta_0)=0$. Since $g(\zeta)\not\equiv0$, there
exist $\zeta_n\to\zeta_0$, such that $g_n(\zeta_n)=0$, and thus
$f_n(z_n+\rho_n\zeta_n)=0$. Since $f_n$ and $h$ has no common
zeros, it follows by the assumption that $\zeta_n$ is a zero of
multiplicity $k$ of $g_n(\zeta)$. By Leibniz's rule, and condition
(a) of Theorem 2, it follows that $g^{(k)}_n(\zeta_n)=1$ and thus
$g^{(k)}(\zeta_0)=1$.

For the proof of the other part of the Claim, observe first that by
\eqref{20} we have $$\DF{f_n(z_n+\rho_n\zeta)}{\rho_n^kz^\ell_n}
\Rightarrow g(\zeta)\quad\text{on}\quad\C,$$and thus
$$\DF{f^{(k)}_n(z_n+\rho_n\zeta)}{z^\ell_n}\Rightarrow g^{(k)}(\zeta)\quad\text{on}\quad\C,$$
and then again by \eqref{19} we get that
$$\DF{f^{(k)}_n(z_n+\rho_n\zeta)}{h(z_n+\rho_n\zeta)}\Rightarrow g^{(k)}(\zeta)\quad\text{on}\quad\C.$$
Thus, if there exists $\zeta_0\in\C$, such that
$g^{(k)}(\zeta_0)=1$, there exists a sequence $\zeta_n\to\zeta_0$,
such that $f^{(k)}_n(z_n+\rho_n\zeta_n) =h(z_n+\rho_n\zeta)\ne0$.
By assumption (b) of Theorem 2 we get that
$f^{(k+1)}_n(z_n+\rho_n\zeta_n)=0$, and letting $n$ tend  to
$\infty$  we get that $g^{(k+1)}(\zeta_0)=0$. The Claim is
proved.\end{proof}

We conclude by \lemref{lemma2} and by \lemref{lemma4}  that
$g(\zeta)=\DF{(\zeta-b)^k}{k!}$ for some $b\in\C$ (observe that $g$
is holomorphic by \eqref{20}). By calculation we get that
$$g^\sharp(0)=\DF{|b|^{k-1}/(k-1)!}{1+|b|^{2k}/k!^2}.$$Then if $|b|\le1$, we get
that $g^\sharp(0)\le\DF1{(k-1)!}$, and if $|b|\ge1$, then
$g^\sharp(0)\le\DF k2$. In either case, we get a contradiction to
\eqref{19}.

\medskip

\noindent\underbar{Case (B)}
\begin{equation}
\label{21}\frac{z_n}{\rho_n}\to\alpha\in\C.
\end{equation}
As in Case (A), it follows that $g(\zeta_0)=0\Longrightarrow
g^{(k)}(\zeta_0)=1$. Now set
$$G_n(\zeta)=\frac{f_n(\rho_n\zeta)}{\rho^{k+\ell}_n}.
$$
From \eqref{18} and \eqref{21} we have
\begin{equation}
\label{22}G_n(\zeta)\Longrightarrow
G(\zeta)=g(\zeta-\alpha)\zeta^{\ell}b(0)\quad\text{on}\quad\C.
\end{equation}
Indeed,
$$\frac{f_n(\rho_n\zeta)}{\rho_n^{k+\ell}}=\frac{f_n(\rho_n\zeta)}{\rho^k_nh(\rho_n\zeta)}\cdot
\frac{h(\rho_n\zeta)}{\rho_n^{\ell}}=\frac{f_n\left(z_n+\rho_n\left
(\zeta-\DF{z_n}{\rho_n}\right)\right)}{\rho^k_nh\left(z_n+\rho_n
\left(\zeta-\DF{z_n}{\rho_n}\right)\right)}\frac{(\rho_n\zeta)^{\ell}
b(\rho_n\zeta)}{\rho_n^{\ell}}$$ (cf. \cite[p.~7]{12}). Since $g $
has a pole of order $\ell$ at $\zeta=-\alpha$ (here we use the
fact that for every $n$, $h$ has no common zeros with $f_n$) and
since $\{G_n\}$ are analytic, we have
\begin{equation}
\label{23}G(0)\ne0,\ \infty.
\end{equation}

We now consider several subcases, depending on the nature of $G$.

\noindent\underbar{Case (BI)} \textit{$G $ is a polynomial.}

Since $\{f_n\}$ is not normal at $z=0$, there exist (after
renumbering) a sequence $z^\ast_n\to0$ such that
\begin{equation}
\label{24}f_n(z^\ast_n)=0.
\end{equation}
Otherwise, there is some $\delta'$, $0<\delta'<\delta$ such that
(before renumbering) $f_n(z)\ne0$ in $\Delta(0,\delta')$, and
since $f_n(z)\Rightarrow\infty$ on $\Delta'(0,\delta)$ we would
have by the minimum principle that $f_n(z)\Rightarrow\infty$ on
$\Delta(0,\delta)$, a contradiction to the non-normality of
$\{f_n\}$ at $z=0$. We have that all the zeros of $g$ are of
multiplicity exactly $k$. Then  by \eqref{22} and \eqref{23}, it
follows that all the zeros of $G$ are also of multiplicity exactly
$k$. We consider now two possibilities.

\medskip

\noindent\underbar{Case (BI1)} $\deg(G)=0$.

We can assume that $z^\ast_n$ from \eqref{24} is the closest zero of
$f_n $ to the origin. Then we have
\begin{equation}
\label{25}\frac{f_n(\rho_n\zeta)}{\rho^{k+\ell}_nb(\rho_n\zeta)}\Longrightarrow
\frac{G(0)}{b(0)}\quad\text{on}\quad\C.
\end{equation}
By \eqref{25} we have
\begin{equation}
\label{26}\frac{z^\ast_n}{\rho_n}\to\infty.
\end{equation}
Define $t_n(\zeta)=f_n(z^\ast_n\zeta)/\left(z^{\ast
k+\ell}_nb(z^\ast_n\zeta)\right)$. We want to show that
$\{t_n(\zeta)\}$ is normal in~$\C^\ast$. For this purpose set
$\tilde{t}_n(\zeta)=f_n(z^\ast_n\zeta)/z^{\ast k+\ell}_n$. Since
$b(0)\ne0$, $\infty$ and $z^\ast_n\to0$,   the normality of
$\{t_n\}$ is equivalent to the normality of $\{\tilde{t}_n\}$, and
the latter follows by \lemref{lemma7}. Now, if $\{t_n\}$ is not
normal at $\zeta=0$, then we can write (after renumbering)
$t_n(\zeta)\Rightarrow\infty$ on $\C^\ast$; but $t_n(1)=0$, so
this is not possible. Hence $\{t_n(\zeta)\}$ is normal at
$\zeta=0$. By \eqref{25} and \eqref{26}, $t_n(0)\to0$  as
$n\to\infty$; and thus since $t_n(\zeta)\ne0$ in $\Delta(0,1/2),$
we get by Hurwitz's Theorem that $t_n(\zeta)\Rightarrow0$ on $\C$.
But $t_n(1)=0$; so by assumption (b) of Theorem 2, we get that
$t^{(k)}_n(1)=1$, a contradiction.

\medskip
 \noindent\underbar{Case (BI2)} $G^{(k)}\equiv
b(0)\zeta^\ell$.

Then we have $G^{(k-1)}(\zeta)=\DF{b(0)\zeta^{\ell+1}}{\ell+1}+C$
and
$G^{(k-2)}(\zeta)=\DF{b(0)\zeta^{\ell+2}}{(\ell+1)(\ell+2)}+C\zeta+D$,
where $C$ and $D$ are two constants. Since all zeros of $G$ have
multiplicity exactly $k$, then for any zero $\widehat{\zeta}$ of
$G$, we have
$G^{(k-2)}(\widehat{\zeta})=G^{(k-1)}(\widehat{\zeta})=0$. So
\begin{equation}
\label{27}\DF{\widehat{\zeta}^{\ell+1}}{\ell+1}+C=0,\quad\text{and}\quad
\DF{\widehat{\zeta}^{\ell+2}}{(\ell+1)(\ell+2)}+C\widehat{\zeta}+D=0.
\end{equation}
By calculation, we have $\DF{(\ell+1)C}{\ell+2}\widehat{\zeta}=-D$. If
$CD=0$, then by \eqref{27}, $\widehat{\zeta}=0$, a contradiction. If
$CD\ne0$, then $\widehat{\zeta}=-\DF{(\ell+2)D}{(\ell+1)C}$, which implies
that $G$ has only one zero $\zeta_0$, and then
$$G(\zeta)=\DF{b(0)\zeta_0^{\ell}(\zeta-\zeta_0)^k}{k!}.$$
This contradicts $G^{(k)}\equiv b(0)\zeta^\ell$.

\medskip

\noindent\underbar{Case (BI3)} $G$ is a nonconstant polynomial and
$G^{(k)}\not\equiv b(0)\zeta^\ell$.

Since all zeros of $G$ have multiplicity exactly $k$, we may assume
that$$G=A\prod\limits_{j=1}^t(\zeta-\zeta_j)^k.$$
where $A\ne0$ is a constant and $\zeta_j\ne0$, $j=1,2,\cdots,t$.

\begin{claim}\label{claim4}
$G(\zeta)=0\Longrightarrow G^{(k)}(\zeta)=b(0)\zeta^\ell\Longrightarrow
G^{(k+1)}(\zeta)=0.$
\end{claim}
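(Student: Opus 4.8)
The plan is to prove the two implications of Claim~\ref{claim4} separately, in each case pulling a zero of the relevant limit function back to the approximating functions $G_n$ via Hurwitz's theorem and then invoking hypotheses (a) and (b) of Theorem~2. The computational input I will set up first is the derivative bookkeeping for $G_n(\zeta)=f_n(\rho_n\zeta)/\rho_n^{k+\ell}$: differentiating gives $G_n^{(k)}(\zeta)=f_n^{(k)}(\rho_n\zeta)/\rho_n^{\ell}$ and $G_n^{(k+1)}(\zeta)=f_n^{(k+1)}(\rho_n\zeta)/\rho_n^{\ell-1}$, while the normalized datum is $h(\rho_n\zeta)/\rho_n^{\ell}=\zeta^{\ell}b(\rho_n\zeta)\Rightarrow b(0)\zeta^{\ell}$ on $\C$, since $h(z)=z^{\ell}b(z)$ with $b(0)\ne0$.

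For the first implication, suppose $G(\zeta_0)=0$. As $G$ is a nonconstant polynomial we have $G\not\equiv0$, so Hurwitz's theorem applied to $G_n\Rightarrow G$ produces $\zeta_n\to\zeta_0$ with $G_n(\zeta_n)=0$, i.e.\ $f_n(\rho_n\zeta_n)=0$. Condition (a) then yields $f_n^{(k)}(\rho_n\zeta_n)=h(\rho_n\zeta_n)$, so that $G_n^{(k)}(\zeta_n)=h(\rho_n\zeta_n)/\rho_n^{\ell}=\zeta_n^{\ell}b(\rho_n\zeta_n)\to b(0)\zeta_0^{\ell}$. Since $G_n^{(k)}\Rightarrow G^{(k)}$ uniformly on compacta and $\zeta_n\to\zeta_0$, the left-hand side also tends to $G^{(k)}(\zeta_0)$, giving $G^{(k)}(\zeta_0)=b(0)\zeta_0^{\ell}$.

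For the second implication, suppose $G^{(k)}(\zeta_0)=b(0)\zeta_0^{\ell}$. Here I invoke the standing hypothesis of Case~(BI3), namely $G^{(k)}\not\equiv b(0)\zeta^{\ell}$, which ensures that the entire function $G^{(k)}(\zeta)-b(0)\zeta^{\ell}$ is not identically zero, so Hurwitz's theorem is applicable. Because $G_n^{(k)}(\zeta)-\zeta^{\ell}b(\rho_n\zeta)=\bigl(f_n^{(k)}(\rho_n\zeta)-h(\rho_n\zeta)\bigr)/\rho_n^{\ell}\Rightarrow G^{(k)}(\zeta)-b(0)\zeta^{\ell}$ and $\zeta_0$ is a zero of the limit, there exist $\zeta_n\to\zeta_0$ with $f_n^{(k)}(\rho_n\zeta_n)=h(\rho_n\zeta_n)$. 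Condition (b) then forces $f_n^{(k+1)}(\rho_n\zeta_n)=0$, whence $G_n^{(k+1)}(\zeta_n)=f_n^{(k+1)}(\rho_n\zeta_n)/\rho_n^{\ell-1}=0$, and letting $n\to\infty$ gives $G^{(k+1)}(\zeta_0)=0$.

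The argument is a double application of Hurwitz's theorem, so there is no deep obstacle; the care lies in the bookkeeping. I must track the powers of $\rho_n$ in each derivative of $G_n$ and verify the normalization $h(\rho_n\zeta)/\rho_n^{\ell}\Rightarrow b(0)\zeta^{\ell}$. The step most easily overlooked is that the second implication \emph{genuinely} uses the Case~(BI3) assumption $G^{(k)}\not\equiv b(0)\zeta^{\ell}$: without it the limit function could vanish identically and Hurwitz would fail to produce the points $\zeta_n$. Finally, one notes that every zero $\zeta_0$ of $G$ is nonzero (by \eqref{23}, $G(0)\ne0$), so $\rho_n\zeta_n\ne0$ for large $n$, and the no-common-zeros hypothesis keeps $h(\rho_n\zeta_n)\ne0$, which is exactly consistent with the value $b(0)\zeta_0^{\ell}\ne0$ obtained in the first implication.
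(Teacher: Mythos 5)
Your proof is correct and follows essentially the same route as the paper's: both parts pull the zero of the limit function back via Hurwitz's theorem (using $G\not\equiv 0$ for the first implication and the Case (BI3) hypothesis $G^{(k)}\not\equiv b(0)\zeta^{\ell}$ for the second), then apply conditions (a) and (b) of Theorem 2 with the identical $\rho_n$-bookkeeping $G_n^{(k)}(\zeta)=f_n^{(k)}(\rho_n\zeta)/\rho_n^{\ell}$ and $G_n^{(k+1)}(\zeta)=f_n^{(k+1)}(\rho_n\zeta)/\rho_n^{\ell-1}$. Your version merely makes explicit the Hurwitz applications and the normalization $h(\rho_n\zeta)/\rho_n^{\ell}\Rightarrow b(0)\zeta^{\ell}$, which the paper leaves implicit.
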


\begin{proof} Suppose first that $G(\zeta_0)=0$. Then there exists a
sequence, $\zeta_n \to\zeta_0$, such that $f_n(\rho_n\zeta_n)=0$,
and thus $f_n^{(k)}(\rho_n\zeta_n)= (\rho_n\zeta_n)^\ell
b(\rho_n\zeta_n)$, that is,
$\DF{f_n^{(k)}(\rho_n\zeta_n)}{\rho^\ell_n}=
\zeta^\ell_nb(\rho_n\zeta_n)$. In the last equation, the left hand
side tends to $\zeta^\ell_0b(0)$  as $n\to\infty$. This proves the
first part of the Claim.

\medskip

Suppose now that $G^{(k)}(\zeta_0)=b(0)\zeta^\ell_0$. Since
$G^{(k)}(\zeta)\not\equiv b(0)\zeta^\ell$, there exists a sequence
$\zeta_n\to\zeta_0$, such that
$\DF{f_n^{(k)}(\rho_n\zeta_n)}{\rho^\ell_n}=
\zeta^\ell_nb(\rho_n\zeta_n)$, that is, $f_n^{(k)}(\rho_n\zeta_n)=
(\rho_n\zeta_n)^\ell b(\rho_n\zeta_n)$, and thus
$f_n^{(k+1)}(\rho_n\zeta_n)=0$. Since $\DF{f^{(k+1)}_n(\rho_n\zeta
)}{\rho^{\ell-1}_n}\Rightarrow G^{(k+1)}(\zeta)$, we deduce that
$G^{(k+1)}(\zeta_0)=0$, and this completes the proof of the Claim.
\end{proof}

It follows from   Claim \ref{claim4} that $G^{(k+1)}(\zeta_j)=0$,
for $1\le j\le t$.

If $t\ge2$, we know that for every $1\le j\le t$,
 \begin{align*}G^{(k+1)}(\zeta)&=A\left[\prod\limits_{j=1}^t(\zeta-\zeta_j)^k\right]^{(k+1)}\\
&=A\left\{
\sum\limits_{\mu=0}^{k+1}\binom{k+1}{\mu}\left[(\zeta-\zeta_j)^k\right]^{(k+1-\mu)}
\left[\prod\limits_{i=1,i\ne j}^t(\zeta-\zeta_i)^k\right]^{(\mu)}
\right\}\\
&=A\left\{(k+1)k!\left[\prod\limits_{i=1,i\ne
j}^t(\zeta-\zeta_i)^k\right]'+(\zeta-\zeta_j)P_j(\zeta)\right\},
\end{align*}
where $P_j$ is a polynomial. Thus,  by   Claim \ref{claim4} we
have
\begin{equation}
\label{28}\left[\prod\limits_{i=1,i\ne
j}^t(\zeta-\zeta_i)^k\right]'\Bigg|_{\zeta_j}=0,\quad1\le j\le t.
\end{equation}
This means that for every $1\le j\le t$,
$$\sum\limits_{i=1\atop i\ne j}^t(\zeta-\zeta_j)^{k-1}\prod\limits_{\ell=1\atop
\ell\ne i,j}^t(\zeta-\zeta_\ell)^k\Bigg|_{\zeta_j}=0.$$Dividing in $\prod\limits_{\ell\ne j}
(\zeta_j-\zeta_\ell)^{k-1}$ gives
$$\sum\limits_{i=1\atop i\ne j}^t\prod\limits_{\ell=1\atop
\ell\ne i,j}^t(\zeta_j-\zeta_\ell)=0.$$Thus $T''(\zeta_j)=0$ for $1\le j\le t$, where
$T(\zeta)=\prod\limits_{i=1}^t(\zeta-\zeta_i)$.

Now, if $t\ge3$, then $T''$ is of degree $t-2$, and vanishes at
$t$ different points, a contradiction. If $t=2$, we get from
\eqref{28} that
$\left[(\zeta-\zeta_2)^k\right]'\Bigg|_{\zeta_1}=0$ and this is
also a contradiction. So $t=1$ and $G$ has only one zero $\zeta_0\
(\zeta_0\ne0)$, which means that
 $G(\zeta)=\DF{b(0)\zeta_0^{\ell}(\zeta-\zeta_0)^k}{k!}.$

By Hurwitz's Theorem, there exists a sequence
$\zeta_{n,0}\to\zeta_0$, such that $G_n(\zeta_{n,0})=0$. If there
exists $\delta'$, $0<\delta'<\delta$, such that for every $n$
(after renumbering), $f_n(z)$ has only one zero
$z_{n,0}=\rho_n\zeta_{n,0}$ in $\Delta(0,\delta')$.

Set
$$H_n(z)=\frac{f_n(z)}{(z-z_{n,0})^k}.$$
Since $H_n(z)$ is a nonvanishing holomorphic function in
$\Delta(0,\delta')$ and $H_n(z)\Rightarrow\infty$ on
$\Delta'(0,\delta)$, we can deduce as before by the minimum
principle that $H_n(z)\Rightarrow\infty$ on $\Delta(0,\delta')$. But
$$H_n(2z_{n,0})=\frac{f_n(2z_{n,0})}{z^k_{n,0}}=\DF{\rho^\ell_nG_n(2\zeta_{n,0})}{\zeta^k_{n,0}}\to0,$$
a contradiction. Thus, we can assume, after renumbering, that for
every $\delta'>0$, $f_n$ has at least two zeros in
$\Delta(0,\delta')$ for large enough $n$. Thus, there exists
another sequence of points $z_{n,1}=\rho_n\zeta_{n,1}$, tending to
zero, where $z_{n,1}$ is also a zero of $f_n(z)$ and
$\zeta_{n,1}\to\infty$, as $n\to\infty$. We can also assume that
$z_{n,1}$ is the closest zero to the origin of $f_n$, except
$z_{n,0}$.  Now set $c_n=z_{n,0}/z_{n,1}$ and define
$K_n(\zeta)=f_n(z_{n,1}\zeta)/z^{k+\ell}_{n,1}$. By
\lemref{lemma7}, $\{K_n(\zeta)\}$ is normal in $\C^\ast$. Now, if
$\{K_n\}$ is normal at $\zeta=0$, then after renumbering we can
assume that
$$K_n(\zeta)\Longrightarrow K(\zeta)\quad\text{on}\quad\C.$$
If $K(\zeta)\not\equiv$ const., then consider
$$L_n(\zeta):=\frac{K_n(\zeta)}{(\zeta-c_n)^k}.$$
Since $c_n\underset{n\to\infty}\longrightarrow0$, then the
sequence $\{L_n \}^\infty_1$ is normal in $\C^\ast$. It is also
normal at $\zeta=0$. Indeed, $K_n(c_n)=0$ (a zero of order $k$)
and so $L_n $ is a nonvanishing holomorphic function in
$\Delta(0,1)$. Thus (after renumbering)
$$L_n(\zeta)\Longrightarrow\frac{K(\zeta)}{\zeta^k}\quad\text{on}\quad\C.$$
But$$L_n(0)=\frac{K_n(0)}{(-c_n)^k}=\frac{G_n(0)}{\zeta^{\ell}_{n,1}(-\zeta_{n,0})^k}
\underset{n\to\infty}\longrightarrow0,\quad (\text{since}\quad
\zeta_{n,1}\underset{n\to\infty}\longrightarrow \infty ),$$ and
$L_n(\zeta)\ne0$ in $\Delta(0,1/2)$; thus
$K(\zeta)/\zeta^k\equiv0$ in $\C$, a contradiction. If, on the
other hand, $K(\zeta)\equiv$ const., then $K(\zeta)\equiv0$ and
$K^{(k)}(1)=0$. But $K^{(k)}(1)=\lim\limits_{n\to
\infty}K^{(k)}_n(1)=
\lim\limits_{n\to\infty}\DF{f^{(k)}_n(z_{n,1})}{z_{n,1}^\ell}=
\lim\limits_{n\to\infty}\DF{h(z_{n,1})}{z_{n,1}^\ell}=
\lim\limits_{n\to\infty}b(z_{n,1})=b(0)$, a contradiction. Hence
we can deduce that $\{K_n\}$ is not normal at $\zeta=0$, and since
$K_n(\zeta)$ is holomorphic in $\Delta$, then
$$K_n(\zeta)\Longrightarrow \infty\quad\text{on}\quad\C^\ast.$$
But $K_n(1)=0$, a contradiction.

\medskip

 \noindent\underbar{Case (BII)} \textit{$G(\zeta)$ is a
transcendental entire function.}

Consider the family
$$\CF(G)=\left\{t_n(z):=\frac{G(2^nz)}{2^{n(k+\ell)}}:n\in\mathbb N\right\}.$$
By  Claim \ref{claim4}, we deduce
\begin{enumerate} \item[(i)] $t_n(z)=0\Longrightarrow t^{(k)}_n(z)=z^\ell$; and
\item[(ii)] $t^{(k)}_n(z)=z^\ell\Longrightarrow t^{(k+1)}_n(z)=0$.
\end{enumerate}
We then get by Theorem CFZ2 that $\CF(G)$ is normal in $\C^\ast$.
Thus there exists $M>0$  such that for every  $z\in
R_{1,2}:=\{z:1\le|z|\le2\}$
$$t^\#_n(z)=\frac{2^{n(k+\ell+1)}|G'(2^nz)|}{2^{2n(k+\ell)}+|G(2^nz)|^2}\le M.$$
Set $r(\zeta):=G(\zeta)/\zeta^{k+\ell}$. Then $r$ is a transcendental
meromorphic function, whose only pole is $\zeta=0$. For every
$\zeta$, $|\zeta|\ge2$ there exists $n\ge1$ and $z\in R_{1,2}$, such
that
\begin{equation}
\label{29}\zeta=2^nz.
\end{equation}
Calculation gives
$$r^\sharp(\zeta)=\frac{|G'(\zeta)\zeta^{k+\ell}-(k+\ell)\zeta^{k+\ell-1}G(\zeta)|}{|\zeta|^{2(k+\ell)}+|G(\zeta)|^2}.$$
Thus, if $|\zeta|\ge2$ satisfies \eqref{29} then
\begin{equation}
\begin{aligned}\label{30}
|\zeta
r^\sharp(\zeta)|&=|2^nz|\frac{|G'(2^nz)(2^nz)^{k+\ell}-(k+\ell)(2^nz)^{k+\ell-1}G(2^nz)|}{|2^nz|^{2(k+\ell)}+|G(2^nz)|
^2}\\
\\
&\le\frac{2^{k+\ell+1}\cdot2^{n(k+\ell+1)}|G'(2^nz)|}{2^{2n(k+\ell)}+|G(2^nz)|^2}+
\frac{(k+\ell)2^{(n+1)(k+\ell)}|G(2^nz)|}{2^{2n(k+\ell)}+|G(2^nz)|^2}.
\end{aligned}
\end{equation}
By separating into two cases, depending on
$|G(2^nz)|>2^{(n+1)(k+\ell)}$ or $|G(2^nz)|\le2^{(n+1)(k+\ell)}$, we
see that the last expression in \eqref{30} is less or equal to
$$2^{k+\ell+1}t^\sharp_n(z)+(k+\ell)2^{2(k+\ell)}.$$Thus, to every $|\zeta|\ge2$,
$$|\zeta r^\sharp(\zeta)|\le M\cdot2^{k+\ell+1}+(k+\ell)2^{2(k+\ell)}.$$
But, according to Theorem B,
$\varlimsup\limits_{\zeta\to\infty}|\zeta|r^\sharp(\zeta)=\infty$,
and we thus have a contradiction (cf. \cite[pp.~19-21]{3}).
Theorem 2 is proved.\quad\hfill$\square$
\section{Proof of Theorem 3}
By Theorem CFZ3, $\CF$ is normal at every point $z_0\in D$ at
which $h(z_0)\ne0$ (so that $\CF$ is quasinormal in $D$). Consider
$z_0\in D$  such that $h(z_0)=0$. Without loss of generality, we
can assume that $z_0=0$, and then $h(z)=z^{\ell}b(z)$, where
$\ell(\ge1)$ is an integer, $b(z)\ne0$ is an analytic function in
$\Delta(0,\delta)$. We take a subsequence
$\{f_n\}^\infty_{1}\subset\CF$, and we only need to prove that
$\{f_n\}$ is not normal at $z=0.$

Define $\mathcal F_2=\left\{F=\DF{f_n}{h}:n\in\mathbb N\right\}.$ It
is enough to prove that $\mathcal F_2$ is normal in
$\Delta(0,\delta).$ Suppose to the contrary that $\CF_2$ is not
normal at $z=0$. By \lemref{lemma1} and the assumptions of Theorem
3, there exist (after renumbering) points $z_n\to0$, $\rho_n\to0^+$
and a nonconstant meromorphic function on $\C$, $g(\zeta)$ such that
\begin{equation}\label{31} g_n(\zeta)=\frac{F_n(z_n+\rho_n\zeta)}
{\rho^2_n}=\frac{f_n(z_n+\rho_n\zeta)}{\rho^2_nh(z_n+\rho_n\zeta)}
\overset\chi\Longrightarrow g(\zeta)\quad\text{on}\quad\C,
\end{equation}
all of whose zeros are multiple and
\begin{equation}
\label{32}\text{for every}\quad\zeta\in\C,\quad g^\sharp(\zeta)\le
g^\sharp(0)=2A+1,
\end{equation}
where $A>1$ is a constant. After renumbering we can assume that
$\{z_n/\rho_n\}^\infty_{n=1}$ converges. We separate now into two
cases.

\medskip

\noindent\underbar{Case (A)}\quad
$\DF{z_n}{\rho_n}\to\infty$.

Similar to the proof of Theorem 2, we can prove that
$g(\zeta)=0\Longrightarrow g''(\zeta)=1$ and that
$g''(\zeta)=1\Longrightarrow g'''(\zeta)=g^{(s)}(\zeta)=0$. Then
by Lemmas \ref{lemma4} and \ref{lemma3}, we
have$$g(\zeta)=\DF{(\zeta-b)^2}2,$$ for some $b\in\C$. Thus
$g^\sharp(0)=\DF{|b|}{1+|b|^4/4}$ and then $g^\sharp(0)\le1$,
which contradicts \eqref{32}.

\medskip

\noindent\underbar{Case (B)}
\begin{equation}
\label{33}\frac{z_n}{\rho_n}\to\alpha\in\C.
\end{equation}
As in the proof of Theorem 2, we have $g(\zeta_0)=0\Longrightarrow
g''(\zeta_0)=1$. Now set
$G_n(\zeta)=\DF{f_n(\rho_n\zeta)}{\rho^{2+\ell}_n}.$ From
\eqref{31} and \eqref{33} we have
$$G_n(\zeta)\Longrightarrow G(\zeta)=b(0)g(\zeta-\alpha)\zeta^{\ell}\quad\text{on}\quad\C.$$
Since $g $ has a pole of order $\ell$ at $\zeta=-\alpha$,
 $G(0)\ne0,\ \infty.$

We now consider several subcases, depending on the nature of $G$.

\noindent\underbar{Case (BI)} \textit{$G $ is a polynomial.}

By a similar method of   proof used in the proof of Theorem 2 (and
using \lemref{lemma8} instead of  \lemref{lemma7} in the
appropriate places), we can get
$$G(\zeta)=\DF{b(0)\zeta_0^{\ell}(\zeta-\zeta_0)^2}{2},$$
and also we can arrive at a contradiction.

\noindent\underbar{Case (BII)} \textit{$G(\zeta)$ is a
transcendental entire function.}

Consider the family
$$\CF(G)=\left\{t_n(z):=\frac{G(2^nz)}{2^{n(2+\ell)}}:n\in\mathbb N\right\}.$$
We have
\begin{enumerate} \item[(i)] $t_n(z)=0\Longrightarrow t''_n(z)=z^\ell$; and
\item[(ii)] $t''_n(z)=z^\ell\Longrightarrow t'''_n(z)=t^{(s)}_n(z)=0$.
\end{enumerate}
We then get by Theorem CFZ3 that $\CF(G)$ is normal in $\C^\ast$.
Set $r(\zeta):=G(\zeta)/\zeta^{2+\ell}$, and we have that, for
every $\zeta$, $|\zeta|\ge2,$ there exists $n\ge1$ and $z\in
R_{1,2}$, such that $$|\zeta r^\sharp(\zeta)|\le
M\cdot2^{2+\ell+1}+(2+\ell)2^{2(2+\ell)}.$$ But, according to
Theorem B,
$\varlimsup\limits_{\zeta\to\infty}|\zeta|r^\sharp(\zeta)=\infty$,
and we thus have a contradiction (cf. \cite[pp.~19-21]{3}).
Theorem 3 is proved. \hfill\quad$\square$

\bibliographystyle{amsplain}

\begin{thebibliography}{90}
\bibitem[1]{1} J.M. Chang, M.L. Fang, and L. Zalcman, \textit{Normal families of holomorphic functions},
Illinois Math. J.  (1) \textbf{48} (2004), 319--337.

\bibitem[2]{2} J. Clunie, and W.K. Hayman, \textit{The spherical derivative of integral and
meromorphic functions}, Comment Math. Helvet. \textbf{40} (1966),
117--148.

\bibitem[3]{3} O. Lehto, \textit{The spherical derivative of a meromorphic function in
the neighborhood of an isolated singularity}, Comment Math. Helvet.
\textbf{33} (1959), 196--205.

\bibitem[4]{4} X. J. Liu, and S. Nevo, \textit{A criterion of normality based on a single holomorphic function},
Acta Math. Sinica, English Series  (1) \textbf{27} (2011),
141--154.

\bibitem[5]{5} F. Lucas, \textit{G\'{e}om\'{e}trie des polyn\^{o}mes},
J. \'{E}cole Polytech. (1) \textbf{46} (1879), 1--33.

\bibitem[6]{6} M. Marden, \textit{Geometry of Polynomials}, American Mathematical Society, Providence, Rhode Island, 1966.

\bibitem[7]{7} S. Nevo, \textit{Applications of Zalcman's Lemma to $Q_m$-normal families},
Analysis   \textbf{21} (2001), 289--325.

\bibitem[8]{8} S. Nevo, X.C. Pang, and L. Zalcman, \textit{Quasinormality and meromorphic
functions with multiple zeros}, J. Anal. Math. \textbf{101} (2007),
1--23.

\bibitem[9]{9} X.C. Pang, \textit{Bloch's principle and normal criterion},
Sci. China Ser.A, \textbf{32} (1989), 782--791.

\bibitem[10]{10} X.C. Pang, \textit{Shared values and normal families},
Analysis, \textbf{22} (2002), 175--182.

\bibitem[11]{11} X.C. Pang, and L. Zalcman, \textit{Normal families and shared values},
Bull. London Math. Soc. \textbf{32} (2000), 325--331.

\bibitem[12]{12} X.C. Pang and L. Zalcman, \textit{Normal families of meromorphic functions with
multiple zeros and poles}, Israel J. Math.   \textbf{136} (2003),
1--9.



\bibitem[13]{14} L. Zalcman, \textit{A heuristic principle in complex function theory}, Amer. Math.
Monthly
\textbf{82}   (1975), 813--817.

\bibitem[14]{15} L. Zalcman, \textit{Normal families: new perspectives}, Bull. Amer. Math. Soc.
 (N.S.)
\textbf{35}   (1998), 215--230.

\bibitem[15]{13} G.M. Zhang, W. Sun, and X.C. Pang, \textit{On the normality of
certain kind of holomorphic functions}, Chin. Ann. Math. Ser. A
(6) \textbf{26} (2005), 765--770.

\end{thebibliography}

\end{document}